\documentclass{amsart}

\usepackage{mathrsfs}
\usepackage{graphicx,amssymb,xcolor,enumerate}
\usepackage{hyperref}
\usepackage{verbatim}
\hypersetup{
    bookmarks=true,         
    pdffitwindow=false,     
    pdfstartview={FitH},    
    colorlinks=false,       
}
%
%
%
 \newtheorem{thm}{Theorem}[section]
 
 \newtheorem{lem}[thm]{Lemma}
 
 \theoremstyle{definition}
 \newtheorem{defn}[thm]{Definition}
 \theoremstyle{remark}

 \numberwithin{equation}{section}

\usepackage{graphicx}
\usepackage{amssymb, amsmath, amsthm}
\usepackage{amsfonts}
\usepackage{amssymb}
\usepackage{float}
\usepackage{mathrsfs}
\usepackage{enumitem} 
\floatplacement{figure}{H}

\newcommand{\trho}{\tilde\rho}
\newcommand{\dis}{\displaystyle}

\newcommand{\divv}{\text{\rm div}}

\newcommand{\R}{\mathbb R}


\newcommand{\intox}{\int_{\R^3}}

\newcommand{\intoxt}{\int_0^t\int_{\R^3}}

\numberwithin{equation}{section}
\numberwithin{theorem}{section}

\numberwithin{figure}{section}
\begin{document}

%
%
%
%
%
%
%
%
%




\title[Global regularity for MHD equations]
 {Global regularity for the 3D compressible magnetohydrodynamics with general pressure}
 
\author{Anthony Suen} 

\address{Department of Mathematics and Information Technology\\The Education University of Hong Kong, Hong Kong}

\email{acksuen@eduhk.hk}


\date{\today}

\keywords{Regularity, blow-up criteria, compressible magnetohydrodynamics}

\subjclass[2000]{35Q35, 35Q80} 

\begin{abstract} 

We address the compressible magnetohydrodynamics (MHD) equations in $\R^3$ and establish a blow-up criterion for the local strong solutions in terms of the density only. Namely, if the density is away from vacuum ($\rho= 0$) and the concentration of mass ($\rho=\infty$), then a local strong solution can be continued globally in time. The results generalise and strengthen the previous ones in the sense that there is no magnetic field present in the criterion and the assumption on the pressure is significantly relaxed. The proof is based on some new a priori estimates for three-dimensional compressible MHD equations.

\end{abstract}

\maketitle
\section{Introduction}
In this paper, we are concerned with the compressible magnetohydrodynamics (MHD) in three space dimensions. The fluid motion is described in the following system of partial differential equations (see Cabannes \cite{cabannes} for a more comprehensive discussion on the system):
\begin{align}
 \rho_t + \divv (\rho u) &=0, \label{MHD1}\\
(\rho u^j)_t + \divv (\rho u^j u) + P(\rho)_{x_j} + ({\textstyle\frac{1}{2}}|B|^2)_{x_j}-\divv (B^j B)&= \mu \Delta u^j +
\lambda \, \divv \,u_{x_j},\label{MHD2}\\ 
B^{j}_{t} + \divv (B^j u - u^j B) &=\nu\Delta B^j,\label{MHD3}\\
\divv\,B &= 0.\label{MHD4}
\end{align}
Here $\rho$, $u=(u^1,u^2,u^3)$ and $B=(B^1,B^2,B^3)$ are functions of $x\in\R^3$ and $t\ge0$ representing density, velocity and magnetic field; $P=P(\rho)$
is the pressure; $\mu$, $\lambda$, $\nu$ are viscous constants. The system \eqref{MHD1}-\eqref{MHD4} is solved subjected to some given initial data:
\begin{align}
(\rho,u,B)(x,0)&=(\rho_0,u_0,B_0)(x).\label{1.5}
\end{align}

The well-posedness of the MHD system \eqref{MHD1}-\eqref{MHD4} have been studied by many mathematicians in decades (see for example \cite{CT10, HW10, HW08, kawashima83, LSW11, LSX16, LY11, sart09, suenhoff12, suen20a} and the references therein), and we now give a brief review on the related results. When the initial data is taken to be close to a constant state in $H^3$, Kawashima \cite{kawashima83} proved the existence of global-in-time $H^3$ solutions to the MHD system. Later, Hoff and Suen \cite{suenhoff12} generalised Kawashima's results to obtain global smooth solutions when the initial data is taken to be $H^3$ but only close to a constant state in $L^2$. The existence of global weak solutions with large initial data was proved by Hu and Wang \cite{HW10, HW08} and Sart \cite{sart09}, which are extensions of Lions-type weak solutions  \cite{lions} for the Navier-Stokes system. With initial $L^2$ data close to a constant state, Hoff and Suen \cite{suenhoff12, suen12, suen20a} generalised Hoff-type intermediate weak solutions \cite{hoff95, hoff05, hoff06, LS16, suen13b, suen14, suen16, suen20b} to obtain global solutions to the MHD system.

On the other hand, the global existence of smooth solution to \eqref{MHD1}-\eqref{MHD4} with arbitrary smooth data is still unknown, hence it is reasonable to consider the possibilities of blowing up of smooth solution. For the corresponding Navier-Stokes system, Xin \cite{xin98} proved that smooth solution will blow up in finite time in the whole space when the initial density has compact support, while Rozanova \cite{rozanova08} showed similar results for rapidly decreasing initial density. Fan-Jiang-Ou \cite{FJO10}, Sun-Wang-Zhang \cite{swz11} and Suen \cite{suen20c} established some blow-up criteria for the classical solutions to 3D compressible flows, which were further extended by Lu-Du-Yao \cite{LDY12} for MHD system. For the isothermal case when $P(\rho)=K\rho$ for some $K>0$, it was proved in \cite{suen13a, suen15} that without vacuum in the initial density, when the density and magnetic field are essential bounded, the smooth solutions to \eqref{MHD1}-\eqref{MHD4} can be extended globally in time.

The main goal of the present paper is to generalise and strengthen the corresponding results of \cite{suen13a, suen15}. The main novelties of this current work can be summarised as follows:
\begin{itemize}
\item We introduce some new type of estimates on functionals which are used for decoupling the velocity and magnetic field;
\item We obtain a blow-up criterion for \eqref{MHD1}-\eqref{MHD4} for general pressure term $P(\rho)$ which is {\it not} restricted to the isothermal case;
\item We assure that the blow-up criterion dependsds {\it only} on density, which is an improvement for the results obtained from \cite{suen13a, suen15} in which the magnetic field was present in the criterion;
\item We do not impose any extra compatibility condition on the initial data, which is required in the work \cite{swz11, HLX11, suen20c}.
\end{itemize} 

We give a brief description on the analysis applied in this work. To extract the ``hidden regularity'' from the velocity $u$ and magnetic field $B$, we introduce an important canonical variable associated with the system \eqref{MHD1}-\eqref{MHD4}, which is known as the {\it effective viscous flux}. To see how it works, by the Helmholtz decomposition of the mechanical forces, we can rewrite the momentum equation \eqref{MHD2} as follows (summation over $k$ is understood):
\begin{equation}\label{derivation for F}
\rho\dot u^j +( {\textstyle\frac{1}{2}}|B|^2)_{x_j}-\divv(B^jB)=F_{x_j}+\mu\omega^{j,k}_{x_k},
\end{equation}
where $\dot u^j=u^j_t+u\cdot u^j$ is the material derivative on $u^j$ and the effective viscous flux $F$ is defined by
\begin{equation}\label{definition of F}
F=(\mu+\lambda){\rm div}(u) - P(\rho) + P(\tilde{\rho}).
\end{equation}
Differentiating \eqref{derivation for F}, we obtain the following Poisson equation
\begin{equation}\label{poisson in F}
\Delta F = \divv(g),
\end{equation}
where $g^j=\rho\dot{u}^j+(\frac{1}{2}|B|^2)_{x_j}-\divv(B^jB)$. The Poisson equation \eqref{poisson in F} can be viewed as the analog for compressible MHD of the well-known elliptic equation for pressure in incompressible flow. By exploring the Rankine-Hugoniot condition (see \cite{suenhoff12}), one can deduce that the effective viscous flux $F$ is relatively more regular than $\divv(u)$ or $P(\rho)$, and it turns out to be crucial for the overall analysis in the following ways:
\begin{enumerate}

\item[(i)] The equation \eqref{derivation for F} allows us to decompose the acceleration density $\rho \dot u$ as the sum of the gradient of the scalar $F$ and the divergence-free vector field $\omega^{\cdot,k}_{x_k}$ (we ignore those lower-order terms involving $B$). The skew-symmetry of $\omega$ insures that these two vector fields are orthogonal in $L^2(\R^3)$, so that $L^2$-bounds for the terms on the left side of \eqref{derivation for F} immediately give $L^2$ bounds for the gradients of both $F$ and $\omega$. These in turn will be used for controlling $\nabla u$ in $L^4$ when $u(\cdot,t)\notin H^2$, which are crucial for estimating different functionals in $u$ and $B$; see section~\ref{A priori estimates section} and the proof of Theorem~\ref{a priori bound thm}.

\item[(ii)] One of the key step in obtaining higher order estimates on the solutions is to bound the time integral of $\|\nabla u(\cdot,t)\|_{L^\infty}$. The key observation is to decompose $u$ as $u=u_{F}+u_{P}$, where $u_{F}$, $u_{P}$ satisfy
\begin{align*}
\left\{
 \begin{array}{lr}
(\mu+\lambda)\Delta u_{F}^{j}=F_{x_j} +(\mu+\lambda)\omega^{j,k}_{x_k}\\
(\mu+\lambda)\Delta u_P^{j}=(P-P(\tilde{\rho}))_{x_j}.\\
\end{array}
\right.
\end{align*}
Using the {\it a priori} bounds on the effective viscous flux $F$, the time integral of $\|\nabla u_F(\cdot,t)\|_{L^\infty}$ can be estimated in terms of $F$, which can be further estimated in terms of some {\it a priori} bounds on $\dot{u}$ and $B$. On the other hand, to control $\int_{0}^{t}||\nabla u_{P}(\cdot,s)||_{\infty}ds$, by applying the analysis on Newtonian potentials given in \cite{BC94}, one can show that if $\Gamma$ is the fundamental solution for the Laplace operator on $\R^3$, then $u^j_P(\cdot,t)=(\mu+\lambda)^{-1}\Gamma_{x_j}*(P(\rho(\cdot,t))-\tilde P)$ is {\it log-Lipschitz} provided that $P(\rho(\cdot,t))\in L^\infty$ holds. This is sufficient to guarantee that the integral curve $x(\cdot,t)$ of $u=u_F+u_P$ is H\"{o}lder continuous under the assumption that $u_F$ has enough regularity as claimed. If we assume that the initial density is H\"{o}lder continuous, then using the mass equation \eqref{MHD1}, it implies that the density is also H\"{o}lder continuous for positive time. Hence with such improved regularity on the density, it allows us to obtain the desired bound on $u_{P}$; see section~\ref{higher order section} and the proof of Theorem~\ref{higher order bounds thm}. Such method was also exploited in \cite{suen20a} for proving global-in-time existence and uniqueness of weak solutions to \eqref{MHD1}-\eqref{MHD4}. 
\end{enumerate}

We now give a precise formulation of our results. First concerning the assumptions on the parameters, we have:
\begin{equation}\label{def of P}
\mbox{$P(\cdot)$ is a $C^2$-function in $\rho$ with $P'(\rho)>0$ for all $\rho>0$;}
\end{equation}
\begin{equation}\label{condition on vis}
\mbox{$\mu$, $\lambda$, $\nu>0$ with $\mu>4\lambda$.}
\end{equation}

Given $\tilde\rho>0$, for the initial data, we assume that
\begin{align}\label{1.8}
\mbox{$\rho_0-\tilde\rho,u_0,B_0\in H^3(\R^3)$ with $\inf(\rho_0)>0$ and $\divv(B_0)=0$.}
\end{align}
 
The following is the main result of this paper:

\begin{thm}\label{main thm}
Assume that the system parameters satisfy \eqref{def of P}-\eqref{condition on vis}. Given $\tilde\rho>0$, suppose $(\rho_0-\tilde\rho,u_0,B_0)$ satisfies \eqref{1.8}. Assume that $(\rho-\tilde\rho,u,B)$ is the smooth solution local-in-time solution to \eqref{MHD1}-\eqref{MHD4} as defined on $\R^3\times[0,T]$, and let $T^*\ge T$ be the maximal existence time of the solution. If $T^* <\infty$, then we have
\begin{align*}
\lim_{t\rightarrow T^*}(||\rho||_{L^\infty((0,t)\times\R^3)}+||\rho^{-1}||_{L^\infty((0,t)\times\R^3)})=+\infty.
\end{align*}
\end{thm}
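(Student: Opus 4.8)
The plan is to argue by contradiction through a standard continuation scheme. I assume that $T^*<\infty$ yet the conclusion fails, so that
\[
\limsup_{t\to T^*}\big(\|\rho\|_{L^\infty((0,t)\times\R^3)}+\|\rho^{-1}\|_{L^\infty((0,t)\times\R^3)}\big)=M<\infty,
\]
which furnishes constants $0<\underline\rho\le\bar\rho<\infty$ with $\underline\rho\le\rho(x,t)\le\bar\rho$ on all of $\R^3\times[0,T^*)$. The entire task then reduces to showing that this two-sided density bound forces uniform-in-time control of $(\rho-\tilde\rho,u,B)$ in $H^3(\R^3)$ on $[0,T^*)$; once such bounds are in hand, the local existence theory can be reapplied with data taken at a time close to $T^*$, extending the solution beyond $T^*$ and contradicting maximality.

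To produce the uniform $H^3$ bounds I would proceed through a hierarchy of a priori estimates, precisely the content I expect from Theorem~\ref{a priori bound thm} and Theorem~\ref{higher order bounds thm}. First comes the basic energy identity: testing \eqref{MHD2} against $u$ and \eqref{MHD3} against $B$, using \eqref{MHD1} and the monotonicity of $P$ in \eqref{def of P}, and noting that the density bounds make the pressure potential comparable to $|\rho-\tilde\rho|^2$, yields $L^\infty_t L^2_x$ control of $(\rho-\tilde\rho,u,B)$ together with $L^2_t H^1_x$ control of $(u,B)$. The crucial first-order step then exploits the effective viscous flux $F$ from \eqref{definition of F} and the Poisson equation \eqref{poisson in F}: since \eqref{derivation for F} decomposes $\rho\dot u$ (modulo the magnetic terms) into the orthogonal pieces $\nabla F$ and $\divv\omega$, an $L^2$ bound on $\rho\dot u$ and the $B$-contributions immediately controls $\nabla F$ and $\nabla\omega$ in $L^2$, and through elliptic regularity controls $\nabla u$ in $L^4$ even when $u(\cdot,t)\notin H^2$. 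These bounds feed the functional estimates that decouple $u$ and $B$ and propagate $\dot u$ and $\nabla B$ in the appropriate spaces.

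The decisive and most delicate step is to close the higher-order estimates by bounding $\int_0^{T^*}\|\nabla u(\cdot,t)\|_{L^\infty}\,dt$ without imposing any compatibility condition on the initial data. Here I would use the splitting $u=u_F+u_P$ described in item (ii) of the introduction: the time integral of $\|\nabla u_F\|_{L^\infty}$ is controlled by the a priori bounds on $F$ (hence on $\dot u$ and $B$), while the potential part $u_P^j=(\mu+\lambda)^{-1}\Gamma_{x_j}*(P(\rho)-\tilde P)$ is only log-Lipschitz. Invoking the Newtonian-potential analysis of \cite{BC94}, log-Lipschitz regularity of $u_P$ guarantees that the flow map of $u=u_F+u_P$ is H\"older continuous, so that \eqref{MHD1} propagates H\"older continuity of the density for positive time; this recovered regularity is precisely what is needed to bound $u_P$ and thereby $\|\nabla u\|_{L^1_t L^\infty_x}$, completing the $H^3$ estimate. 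I expect this log-Lipschitz/H\"older-propagation argument to be the main obstacle, since it is the mechanism that replaces the usual compatibility condition and it must be carried out while simultaneously keeping the magnetic field under control in the coupled system. With the uniform $H^3$ bounds established, the continuation argument outlined above yields the contradiction and proves the theorem.
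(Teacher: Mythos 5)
Your proposal is correct and follows essentially the same route as the paper: a contradiction argument assuming a uniform two-sided density bound on $[0,T^*)$, closed via the effective-viscous-flux estimates of Theorem~\ref{a priori bound thm}, the splitting $u=u_F+u_P$ with the log-Lipschitz/H\"older-propagation argument to bound $\int_0^t\|\nabla u(\cdot,s)\|_{L^\infty}\,ds$, the resulting uniform $H^3$ bounds of Theorem~\ref{higher order bounds thm}, and a continuation step contradicting the maximality of $T^*$. The only cosmetic difference is that the paper invokes the open-closed argument of Hoff--Suen for the final extension rather than restarting the local existence theorem at a time near $T^*$, but these amount to the same standard continuation mechanism.
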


The rest of the paper is organised as follows. In section~\ref{prelim section}, we recall some known facts and inequalities which will be useful for later analysis. In section~\ref{A priori estimates section}, we begin the proof of Theorem~\ref{main thm} with a number of {\em a priori} bounds for smooth solutions, which are summarised in Theorem~\ref{a priori bound thm}. Finally in section~\ref{higher order section}, we complete the proof of Theorem~\ref{main thm} via a contradiction argument by deriving higher order $H^3$-bounds for smooth solutions. 

\section{Preliminaries}\label{prelim section}

In this section, we recall some known facts and useful inequalities for our analysis. We first state a local existence theorem for \eqref{MHD1}-\eqref{MHD4} proved by Kawashima \cite[pg.~34--35 and pg.~52--53]{kawashima83}:

\begin{thm}\label{Kawashima thm}
Assume that the system parameters satisfy \eqref{def of P}-\eqref{condition on vis}. Then given $\tilde\rho>0$ and $\tilde C>0$, there is a positive time $T$ depending on $\tilde\rho$, $\tilde C$ and the parameters $\varepsilon,\lambda,\nu,P$ such that if the initial data $(\rho_0,u_0,B_0)$ is given satisfying \eqref{1.8} and 
\begin{align*}
C_0<\tilde C,
\end{align*}
then there is a solution $(\rho-\tilde\rho,u,B)$ to \eqref{MHD1}-\eqref{MHD4} defined on $\R^3\times[0,T]$ satisfying
\begin{equation}\label{1.1a}
\rho - \tilde{\rho}\in C([0,T];H^{3}(\R^3))\cap C^1 ([0,T];H^{2}(\R^3))
\end{equation}
and
\begin{equation}\label{1.1b}
u,B\in C([0,T];H^{3}(\R^3))\cap C^1 ([0,T];H^1 (\R^3))\cap L^2([0,T];H^4 (\R^3)).
\end{equation}
\end{thm}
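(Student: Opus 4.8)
The statement is a local well-posedness result for the quasilinear parabolic--hyperbolic system \eqref{MHD1}--\eqref{MHD4}, and my plan is to construct the solution by a linearized iteration (Picard) scheme, close a uniform energy estimate on a short time interval, and then pass to the limit by a contraction argument. Writing $v=\rho-\tilde\rho$ and starting from $(v^0,u^0,B^0)\equiv(0,u_0,B_0)$, I would build a sequence $(v^{n+1},u^{n+1},B^{n+1})$ by freezing coefficients at the previous iterate: first solve the linear transport equation
\[
v^{n+1}_t + u^n\cdot\nabla v^{n+1} + (v^{n+1}+\tilde\rho)\,\divv\,u^n = 0
\]
for the density, then the linear parabolic system
\[
\rho^n\big(u^{n+1}_t + u^n\cdot\nabla u^{n+1}\big) = \mu\Delta u^{n+1} + \lambda\nabla\,\divv\,u^{n+1} - \nabla P(\rho^{n+1}) - \nabla(\tfrac12|B^n|^2) + \divv(B^n\otimes B^n)
\]
for the velocity, and finally the linear parabolic equation $B^{n+1}_t + \divv(B^{n+1}\otimes u^n - u^n\otimes B^{n+1}) = \nu\Delta B^{n+1}$ for the magnetic field, where $\rho^n=v^n+\tilde\rho$. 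Each linear problem is solvable in the stated regularity classes by standard transport and parabolic theory, and the transport equation obeys a maximum principle, so the lower bound $\inf\rho^{n+1}>0$ is preserved on the interval considered.

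The heart of the argument is a uniform a priori bound showing that, for $T$ depending only on $\tilde\rho$, $\tilde C$ and the parameters, the iterates remain in a fixed ball
\[
\sup_{[0,T]}\big(\|v^n\|_{H^3} + \|u^n\|_{H^3} + \|B^n\|_{H^3}\big) + \int_0^T\big(\|u^n\|_{H^4}^2 + \|B^n\|_{H^4}^2\big)\,dt \le M(\tilde C).
\]
For the two parabolic equations this follows from $H^3$ energy estimates in which the dissipative terms furnish the $L^2([0,T];H^4)$ control and gain one derivative, while the forcing and convection terms are absorbed using the embedding $H^3(\R^3)\hookrightarrow W^{1,\infty}$, the algebra property of $H^3$, and the inductive bound on the previous iterate. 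For the density, since the transport equation yields no smoothing, the $H^3$ norm must be propagated directly: differentiating up to third order, estimating the resulting commutators by $\|\nabla u^n\|_{L^\infty}$, and applying Gronwall gives $\sup_{[0,T]}\|v^{n+1}\|_{H^3}^2 \le C\|v_0\|_{H^3}^2\exp\!\big(CT\sup\|u^n\|_{H^3}\big)$, which stays inside the ball once $T$ is small. Choosing $T=T(\tilde C)$ accordingly closes the induction.

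With the uniform bound in hand, I would show that the successive differences contract in the weaker norm $L^\infty([0,T];L^2)\cap L^2([0,T];H^1)$: subtracting consecutive equations and performing $L^2$ estimates on $(v^{n+1}-v^n,u^{n+1}-u^n,B^{n+1}-B^n)$, each difference of nonlinear terms is controlled by the uniform $H^3$ bounds times the lower-order norm of the differences, producing a factor $CT^{1/2}$ up front; shrinking $T$ once more yields a contraction, so the sequence is Cauchy and converges strongly in this norm to a limit $(v,u,B)$. Interpolating this strong convergence against the uniform $H^3$ bounds upgrades the convergence in all intermediate norms, so the limit solves \eqref{MHD1}--\eqref{MHD4} and inherits \eqref{1.1a}--\eqref{1.1b} by weak-$*$ lower semicontinuity together with the parabolic $L^2H^4$ estimate; continuity in time into $H^3$ (rather than mere weak continuity) is recovered by the standard Bona--Smith/Lions--Magenes argument, and uniqueness follows from the same $L^2$ difference estimate applied to two solutions. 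The constraint \eqref{MHD4} propagates automatically: taking the divergence of \eqref{MHD3} and relabeling dummy indices shows the convective terms cancel, so $\divv\,B$ solves the heat equation $(\divv B)_t=\nu\Delta(\divv B)$, whence $\divv\,B_0=0$ forces $\divv\,B\equiv0$.

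The step I expect to be the main obstacle is closing the uniform $H^3$ estimate for the density coupled to the velocity. Because the mass equation is purely hyperbolic, the top-order density bound cannot borrow regularity from any dissipation and must be driven entirely by $\|\nabla u\|_{L^\infty}$ through Gronwall; keeping the exponential growth factor under control on an interval depending only on $\tilde C$, while simultaneously maintaining $\inf\rho>0$ (needed to invert the weight $\rho^n$ in the velocity equation), is the delicate point. This is precisely where the composite symmetrizable parabolic--hyperbolic structure of \eqref{MHD1}--\eqref{MHD4}, in the sense of Kawashima, is used to organize the estimates so that the degenerate (density) block and the dissipative (velocity, magnetic) blocks close together.
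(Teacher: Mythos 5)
The paper offers no proof of this statement at all: it is quoted directly from Kawashima's thesis \cite[pg.~34--35 and pg.~52--53]{kawashima83}, whose argument is precisely the linearized-iteration-plus-energy-estimate scheme for symmetrizable hyperbolic--parabolic composite systems that you outline (freeze coefficients, uniform $H^3$ ball with $L^2_tH^4$ dissipation, contraction in $L^\infty_tL^2\cap L^2_tH^1$, propagation of $\divv B=0$ by antisymmetry). Your proposal is therefore correct and essentially the same approach as the cited proof; the only sketch-level imprecision worth noting is that your displayed Gronwall bound for $\|v^{n+1}\|_{H^3}$ omits the inhomogeneous forcing $\tilde\rho\,\divv u^n$, whose top-order contribution requires $u^n\in H^4$ and is controlled by $\int_0^T\|u^n\|_{H^4}\,dt\le T^{1/2}\Big(\int_0^T\|u^n\|_{H^4}^2\,dt\Big)^{1/2}$ from the parabolic part of the inductive bound.
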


We make use of the following standard facts (see Ziemer \cite[Theorem~2.1.4, Remark~2.4.3, and Theorem~2.4.4]{ziemer} for example). First, given $r\in[2,6]$ there is a constant $C(r)$ such that for $w\in H^1 (\R^3)$,
\begin{equation}\label{Lr bound general}
\|w\|_{L^r(\R^3)} \le C(r) \left(\|w\|_{L^2(\R^3)}^{(6-r)/2r}\|\nabla w\|_{L^2(\R^3)}^{(3r-6)/2r}\right).
\end{equation}
For any $r\in (1,\infty)$ there is a constant $C(r)$ such that for $w\in W^{1,r}(\R^3)$,
\begin{equation}\label{L infty bound general}
\|w\|_{L^\infty (\R^3)} \le C(r) \|w\|_{W^{1,r}(\R^3)}.
\end{equation}

\section{A priori estimates}\label{A priori estimates section}

In this section we derive {\em a priori} estimates for the local solution $(\rho-\tilde\rho,u,B)$ on $[0,t]$ with $t\in[0,T^*)$ as described by Theorem~\ref{main thm}. Here $T^*$ is the maximal time of existence which is defined in the following sense:

\begin{defn}
We call $T^*\in(0, \infty)$ to be the maximal time of existence of a smooth solution $(\rho-\tilde\rho,u,B)$ to \eqref{MHD1}-\eqref{MHD4} if for any $t\in[0,T^*)$, $(\rho-\tilde\rho,u,B)$ solves \eqref{MHD1}-\eqref{MHD4} in $[0, t]\times\R^3$ and satisfies \eqref{1.1a}-\eqref{1.1b}; moreover, the conditions \eqref{1.1a}-\eqref{1.1b} fail to hold when $t=T^*$.
\end{defn}

We will prove Theorem~\ref{main thm} using a contradiction argument. Therefore, for the sake of contradiction, we assume that
\begin{align}\label{pointwise bound on rho finite time}
||\rho||_{L^\infty((0,T^*)\times\R^3)}+||\rho^{-1}||_{L^\infty((0,T^*)\times\R^3)}\le M_0.
\end{align}
To facilitate our exposition, we first define some auxiliary functionals for $t\in[0,T^*)$:
\begin{align*}
\Phi_1(t)=\sup_{0\le s \le t}\int_{\R^3}(|\nabla u|^2+|\nabla B|^2) +\int_0^t\int_{\R^3}(|\dot u|^2+|B_t|^2),
\end{align*}
\begin{align*}
\Phi_2(t)=\sup_{0\le s \le t}\int_{\R^3}(|\dot u|^2+|B_t|^2) +\int_0^t\int_{\R^3}(|\nabla\dot u|^2+|\nabla B_t|^2),
\end{align*}
\begin{align*}
\Phi_3(t)=\int_0^t\int_{\R^3}(|\nabla u|^4+|\nabla B|^4),\qquad \Phi_4(t)=\intoxt (|\nabla u|^3+|\nabla B|^3).
\end{align*}
\begin{align*}
\Phi_5(t)=\sup_{0\le s\le t}\intox (|\nabla B|^2|B|^2+|\nabla u|^2|B|^2+|\nabla B|^2|u|^2).
\end{align*}
The following is the main theorem of this section:
\begin{thm}\label{a priori bound thm}
Assume that the hypotheses and notations in Theorem~\ref{main thm} are in force. Given $M_0>0$ and $\tilde\rho>0$, assume further that $(\rho-\tilde\rho,u,B)$ satisfies \eqref{pointwise bound on rho finite time}. Then for each $t\in[0,T^*)$, there exists a positive constant $C$ which depends on $M_0$, $t$, $\tilde\rho$ and the system parameters $P$, $\mu$, $\lambda$, $\nu$ such that
\begin{align}\label{bounds on phi 1 and 2}
\Phi_1(t)+\Phi_2(t)\le C.
\end{align}
\end{thm}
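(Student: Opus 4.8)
The plan is to bound all five functionals $\Phi_1,\dots,\Phi_5$ simultaneously and then close the resulting coupled system by a Grönwall argument, extracting the claimed bound on $\Phi_1+\Phi_2$ at the end. I would first record the basic energy estimate: testing \eqref{MHD2} with $u$ and \eqref{MHD3} with $B$, using \eqref{MHD1}, the condition $\divv B=0$, and the pressure potential $G$ defined by $\rho G''(\rho)=P'(\rho)$, one obtains
\[
\sup_{0\le s\le t}\intox\big(\rho|u|^2+|B|^2+G(\rho)\big)+\intoxt\big(\mu|\nabla u|^2+(\mu+\lambda)(\divv u)^2+\nu|\nabla B|^2\big)\le C.
\]
The two-sided bound $\rho,\rho^{-1}\le M_0$ from \eqref{pointwise bound on rho finite time} makes $G(\rho)$ comparable to $(\rho-\tilde\rho)^2$, hence $\sup_s\intox(P(\rho)-P(\tilde\rho))^2\le C$, and keeps the density weights harmless throughout the subsequent estimates.

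The core is the first-order estimate for $\Phi_1$. I would test the rewritten momentum equation \eqref{derivation for F} with $\dot u^j$ and the induction equation \eqref{MHD3} with $B_t$, then integrate by parts. The viscous and magnetic-diffusion terms produce the time derivative of $\intox(F^2+\mu|\omega|^2+|\nabla B|^2)$, which up to the controlled pressure terms is equivalent to that of $\intox(|\nabla u|^2+|\nabla B|^2)$, while the density weight yields $\intox(|\dot u|^2+|B_t|^2)$; these are exactly the quantities appearing in $\Phi_1$. The error terms split into two families: the convective part of $\dot u=u_t+u\cdot\nabla u$ produces cubic expressions in $\nabla u$ dominated by $\Phi_4$, and the magnetic coupling $\intox\dot u^j\big[(\tfrac12|B|^2)_{x_j}-\divv(B^jB)\big]$, after integration by parts, is controlled via Hölder and \eqref{Lr bound general} by contributions governed by $\Phi_5$. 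The structural assumption $\mu>4\lambda$ in \eqref{condition on vis} is invoked here to guarantee positivity of the quadratic forms in $\nabla u$ that arise. The outcome is a differential inequality of the schematic shape
\[
\frac{d}{dt}\,\mathcal E_1(t)+\intox\big(|\dot u|^2+|B_t|^2\big)\le C\big(1+\Phi_3+\Phi_4+\Phi_5\big),
\]
with $\mathcal E_1\simeq\intox(|\nabla u|^2+|\nabla B|^2)$, so $\Phi_1$ is controlled once the right-hand side is.

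Next I would bound $\Phi_2$ by applying $\partial_t+\divv(u\,\cdot)$ to \eqref{MHD2} and testing with $\dot u$, together with the time-differentiated induction equation tested with $B_t$. This gives $\frac{d}{dt}\intox(|\dot u|^2+|B_t|^2)+\intox(|\nabla\dot u|^2+|\nabla B_t|^2)$ against error terms that are now quartic in $\nabla u$ and involve products of $B$ with derivatives, controlled by $\Phi_3$, $\Phi_5$ and interpolation. To close the loop I must estimate the auxiliary functionals through the elliptic theory of the effective viscous flux: from \eqref{poisson in F} and the skew-symmetric part of \eqref{derivation for F} one has $\|\nabla F\|_{L^2}+\mu\|\nabla\omega\|_{L^2}\le C\|g\|_{L^2}$ with $\|g\|_{L^2}^2\le C\big(\intox|\dot u|^2+\intox|B|^2|\nabla B|^2\big)$, the last term lying under $\Phi_5$; then \eqref{Lr bound general} converts these bounds into $L^4$ and $L^3$ control of $\nabla u$ and $\nabla B$, expressing $\Phi_3,\Phi_4$ as fractional powers of $\Phi_1,\Phi_2,\Phi_5$. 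The functional $\Phi_5$ itself is handled by a separate weighted energy estimate on \eqref{MHD3} and its coupling with \eqref{MHD2}, feeding back into $\Phi_1,\Phi_2$.

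Finally, the five differential inequalities constitute a closed system in $(\Phi_1,\dots,\Phi_5)$, which I would integrate in time and close by Grönwall to reach $\Phi_1(t)+\Phi_2(t)\le C(M_0,t,\tilde\rho,P,\mu,\lambda,\nu)$. The main obstacle is the magnetic–velocity coupling: since the target criterion must depend on $\rho$ alone, no bound on $B$ beyond the basic energy may be assumed, so every quadratic magnetic term in the momentum equation and every occurrence of $u$ in the induction equation has to be absorbed through the weighted functional $\Phi_5$ and careful interpolation rather than through a crude $L^\infty$ bound on $B$. The delicate part is arranging the error bookkeeping so that no functional is estimated only by a super-linear power of itself, which would destroy the Grönwall closure; this is exactly what the new functional estimates for decoupling $u$ and $B$ are designed to secure.
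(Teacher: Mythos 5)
Your overall two-step architecture (first-order estimate for $\Phi_1$, convective/time-differentiated estimate for $\Phi_2$, elliptic theory for $F$ and $\omega$ to control $\Phi_3,\Phi_4$) matches the paper, but there is a genuine gap: you never derive the $L^6$ bound $\sup_{0\le s\le t}\intox(|u|^6+|B|^6)\le C$, which in the paper is a separate lemma proved by testing \eqref{MHD2} and \eqref{MHD3} against $|u|^4u$- and $|B|^4B$-type multipliers, and which is precisely where the structural hypothesis $\mu>4\lambda$ of \eqref{condition on vis} enters (it makes the coefficient $-24\lambda+6\mu$ of $\intoxt|u|^2|\nabla(|u|^2)|^2$ positive). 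You instead attribute $\mu>4\lambda$ to positivity of quadratic forms in the $\dot u$ estimate, where it is not needed, and you propose to absorb all magnetic--velocity coupling terms through $\Phi_5$. This routing is fatal to the closure. The coupling terms in the $\Phi_1$ estimate, e.g.\ $\intoxt|\nabla B|^2|B|^2$, are handled in the paper by H\"older against $\big(\intoxt|B|^6\big)^{1/3}$, and it is only because $\|B\|_{L^6}$ is bounded \emph{independently} of $\Phi_1,\dots,\Phi_5$ that the resulting powers ($\Phi_1^{5/6}$, $\Phi_1^{1/6}\Phi_4^{1/3}$, etc.) come out sublinear. If one instead uses Sobolev, $\|B\|_{L^6}^2\le C\|\nabla B\|_{L^2}^2\le C\Phi_1$, the same term is bounded by $C\,\Phi_1\Phi_4^{2/3}$ (up to a factor in $t$), which is superlinear in $\Phi_1$ after substituting $\Phi_4\lesssim\Phi_1^{5/4}+\cdots$, and no algebraic or Gr\"onwall argument closes.

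The same defect appears in your treatment of $\Phi_5$: in the paper, the bound $\Phi_5\le C+C(\Phi_2+\Phi_5)^{s'}$ with $s'\in(0,1)$ is obtained \emph{after} $\Phi_1+\Phi_4\le C$ has already been established, because the factors $\intox|\nabla u|^2$ and $\intox|\nabla B|^2$ multiplying $\|B\|_{L^\infty}^2$ and $\|u\|_{L^\infty}^2$ are absorbed into the constant, and the $\|B\|_{L^r}$, $\|u\|_{L^r}$ terms are controlled by $L^2$--$L^6$ interpolation, again requiring \eqref{L6 bound}. In your scheme $\Phi_1$ is bounded linearly by $\Phi_5$ while $\Phi_5$ carries a factor of $\Phi_1$ (through $\intox|\nabla u|^2|B|^2\le\|B\|_{L^\infty}^2\intox|\nabla u|^2$), so the coupled system contains a product $\Phi_1\cdot(\Phi_2+\Phi_5)^{s'}$ and is not closable; this is exactly the ``super-linear power'' obstruction you flag at the end but do not resolve. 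The fix is the paper's sequential structure: prove the weighted $L^6$ estimate first (this is where \eqref{condition on vis} is consumed), then bound $\Phi_1,\Phi_4$ by sublinear powers of themselves alone, and only then bound $\Phi_2,\Phi_3,\Phi_5$ using the already-fixed constants; the closure at each stage is algebraic rather than a single Gr\"onwall inequality for the full five-functional system.
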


We prove Theorem~\ref{a priori bound thm} in a sequence of lemmas. Throughout this section, for $t\in[0,T^*)$, $C$ always denotes a generic constant which depends only on $\mu$, $\lambda$, $a$, $\nu$, $\trho$, $M_0$, $t$ and the initial data. For simplicity, we drop the symbols $dx$, $ds$ or $dxds$ from the integrals.

We first give the following estimates on the effective vicious flux $F$ which is defined in \eqref{definition of F}.

\begin{lem}\label{estimate on effective viscous flux}
Assume that $\rho$ satisfies \eqref{pointwise bound on rho finite time}. For each $p>1$, there is a constant $C>0$ such that for all $t>0$, we have
\begin{equation}\label{bound on F}
\|F(\cdot,t)\|_{L^p}\le\Big[\|\nabla u(\cdot,t)\|_{L^p}+\|(\rho-\tilde\rho)(\cdot,t)\|_{L^p}\Big],
\end{equation}
and
\begin{equation}\label{bound on nabla F}
\|\nabla F(\cdot,t)\|_{L^p}\le\Big[\|\dot{u}(\cdot,t)\|_{L^p}+\|B\nabla B(\cdot,t)\|_{L^p}\Big].
\end{equation}
\end{lem}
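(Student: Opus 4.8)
The plan is to treat the two bounds separately, using only the algebraic definition \eqref{definition of F} for the first and the Poisson equation \eqref{poisson in F} together with standard singular-integral theory for the second. Throughout, the hypothesis \eqref{pointwise bound on rho finite time} is used in the form that $\rho$ is confined to the compact interval $[M_0^{-1},M_0]$, so that all constants depending on pointwise values of $\rho$ are uniformly controlled.

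For \eqref{bound on F}, I would start directly from $F=(\mu+\lambda)\divv(u)-(P(\rho)-P(\tilde\rho))$ and apply the triangle inequality in $L^p$. The divergence term is controlled by $\|\nabla u\|_{L^p}$, since $\divv(u)$ is a partial trace of $\nabla u$. For the pressure term, the key point is that on $[M_0^{-1},M_0]$ the function $P$ is $C^1$ by \eqref{def of P}; hence the mean value theorem gives $|P(\rho)-P(\tilde\rho)|\le\big(\sup_{[M_0^{-1},M_0]}|P'|\big)\,|\rho-\tilde\rho|$ pointwise, and taking $L^p$ norms yields $\|P(\rho)-P(\tilde\rho)\|_{L^p}\le C\|\rho-\tilde\rho\|_{L^p}$. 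Absorbing $(\mu+\lambda)$ and the bound on $P'$ into $C$ completes this part.

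For \eqref{bound on nabla F}, I would use that $F$ solves $\Delta F=\divv(g)$ with $g^j=\rho\dot u^j+(\frac12|B|^2)_{x_j}-\divv(B^jB)$. Formally $\nabla F=\nabla\Delta^{-1}\divv(g)$, and this operator is a composition of Riesz transforms, hence a Calder\'on--Zygmund operator bounded on $L^p$ for every $p\in(1,\infty)$; this gives $\|\nabla F\|_{L^p}\le C\|g\|_{L^p}$. It then remains to bound $g$ componentwise. The first term satisfies $\|\rho\dot u\|_{L^p}\le\|\rho\|_{L^\infty}\|\dot u\|_{L^p}\le M_0\|\dot u\|_{L^p}$. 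For the magnetic contributions, $(\frac12|B|^2)_{x_j}=B^kB^k_{x_j}$ is pointwise dominated by $|B|\,|\nabla B|$; and using the constraint $\divv B=0$ from \eqref{MHD4}, one has $\divv(B^jB)=B^j\,\divv B+(B\cdot\nabla)B^j=(B\cdot\nabla)B^j$, which is likewise dominated by $|B|\,|\nabla B|$. Hence $\|g\|_{L^p}\le C\big(\|\dot u\|_{L^p}+\|B\nabla B\|_{L^p}\big)$, and combining with the singular-integral bound yields \eqref{bound on nabla F}.

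The only genuinely non-elementary ingredient is the $L^p$ boundedness of $\nabla\Delta^{-1}\divv$, which I expect to be the main point requiring care; it is cleanest to justify it via the $L^p$ continuity of the Riesz transforms, valid precisely for $1<p<\infty$, which is exactly why the hypothesis $p>1$ (and implicitly $p<\infty$) is imposed. Everything else is pointwise algebra combined with the uniform bounds on $\rho$ supplied by \eqref{pointwise bound on rho finite time}. In particular, the cancellation $B^j\,\divv B=0$ is what keeps all the magnetic terms of the schematic form $B\nabla B$, rather than producing a stray second-derivative term that would not fit the stated right-hand side.
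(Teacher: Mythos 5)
Your proposal is correct and follows essentially the same route as the paper: \eqref{bound on F} from the definition of $F$ together with the mean value theorem for $P$ on $[M_0^{-1},M_0]$, and \eqref{bound on nabla F} from the Poisson equation \eqref{poisson in F} plus the $L^p$ boundedness of $\nabla\Delta^{-1}\divv$ (your Riesz-transform/Calder\'on--Zygmund justification is the same mechanism as the paper's appeal to the Marcinkiewicz multiplier theorem, since the operator has symbol $\xi_j\xi_k/|\xi|^2$). Your write-up is in fact more explicit than the paper's, spelling out the bound on $g$ and the cancellation $B^j\divv B=0$ that the paper leaves implicit.
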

\begin{proof}
The assertion \eqref{bound on F} follows immediately from the definition of $F$, and the proof of \eqref{bound on nabla F} relies on the Poisson equation \eqref{poisson in F} and the Marcinkiewicz multiplier theorem (refer to Stein \cite{stein70}, pg. 96). 
\end{proof}

Using the estimates \eqref{bound on F}-\eqref{bound on nabla F} on $F$, we have the following estimates on $\nabla u$ and $\nabla\omega$:

\begin{lem}\label{estimate on nabla u lemma}
Assume that $\rho$ satisfies \eqref{pointwise bound on rho finite time}. For each $p>1$, there is a constant $C>0$ depends on $p$ such that for all $t>0$, we have
\begin{align}\label{bound on nabla u}
\|\nabla u(\cdot,t)\|_{L^p}\le C\Big[\|F(\cdot,t)\|_{L^p}+\|\omega(\cdot,t)\|_{L^p}+\|(P-\tilde P)(\cdot,t)\|_{L^p}\Big],
\end{align}
\begin{align}\label{bound on omega}
\|\nabla\omega(\cdot,t)\|_{L^p}\le C\Big[\|\dot{u}(\cdot,t)\|_{L^p}+\|B\nabla B(\cdot,t)\|_{L^p}\Big].
\end{align}
\end{lem}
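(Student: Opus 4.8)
The plan is to recover the full gradient $\nabla u$ from its divergence and its curl by elliptic theory, and then to exploit the momentum equation to express the curl itself through a Poisson equation. The starting point is the vector-Laplacian identity: writing $\omega^{j,k}=u^j_{x_k}-u^k_{x_j}$, one has
\begin{equation*}
\Delta u^j=(\divv u)_{x_j}+\omega^{j,k}_{x_k},
\end{equation*}
so that, since $u(\cdot,t)\in H^1(\R^3)$ decays at infinity, $u^j=\Gamma*[(\divv u)_{x_j}+\omega^{j,k}_{x_k}]$ with $\Gamma$ the fundamental solution of $\Delta$ on $\R^3$. Differentiating once more and integrating by parts moves both derivatives onto $\Gamma$, so each component $u^j_{x_i}$ is expressed as a sum of double Riesz transforms (Fourier multipliers of the form $\xi_a\xi_b/|\xi|^2$) applied to $\divv u$ and to $\omega$. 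By the Marcinkiewicz multiplier theorem --- exactly the tool already invoked in the proof of Lemma~\ref{estimate on effective viscous flux} --- these operators are bounded on $L^p$ for every $p\in(1,\infty)$, giving $\|\nabla u\|_{L^p}\le C(\|\divv u\|_{L^p}+\|\omega\|_{L^p})$. To conclude \eqref{bound on nabla u} I would then simply read off $\divv u=(\mu+\lambda)^{-1}(F+(P-\tilde P))$ from the definition \eqref{definition of F}, whence $\|\divv u\|_{L^p}\le C(\|F\|_{L^p}+\|(P-\tilde P)\|_{L^p})$.

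For \eqref{bound on omega}, the idea is to derive a clean Poisson equation for $\omega$ itself. From \eqref{derivation for F} we have $\mu\,\omega^{j,l}_{x_l}=g^j-F_{x_j}$ with $g^j=\rho\dot u^j+(\tfrac12|B|^2)_{x_j}-\divv(B^jB)$. Applying $\partial_{x_k}$ to the $j$-th relation and $\partial_{x_j}$ to the $k$-th relation and subtracting, the symmetric second derivatives of $F$ cancel; combining this with the identity $\Delta\omega^{j,k}=\omega^{j,l}_{x_lx_k}-\omega^{k,l}_{x_lx_j}$ (obtained from the vector-Laplacian identity above) yields
\begin{equation*}
\mu\,\Delta\omega^{j,k}=g^j_{x_k}-g^k_{x_j}.
\end{equation*}
Inverting the Laplacian and moving the derivatives onto $\Gamma$ again produces double Riesz transforms acting on $g$, so that $\|\nabla\omega\|_{L^p}\le C\|g\|_{L^p}$ for $p\in(1,\infty)$.

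It then remains to control $\|g\|_{L^p}$. Using the a priori bound \eqref{pointwise bound on rho finite time} gives $\|\rho\dot u\|_{L^p}\le M_0\|\dot u\|_{L^p}$, while the two magnetic contributions $(\tfrac12|B|^2)_{x_j}$ and $\divv(B^jB)$ are each pointwise dominated by $C|B|\,|\nabla B|$, so $\|g\|_{L^p}\le C(\|\dot u\|_{L^p}+\|B\nabla B\|_{L^p})$; this yields \eqref{bound on omega} and is consistent with \eqref{bound on nabla F}. The only genuine obstacle is the first step: one must verify that the double-Riesz representation of $\nabla u$ (and of $\nabla\omega$) is legitimate --- that is, that $u$ really equals the Newtonian potential of its Laplacian --- which is where the decay and integrability of the local $H^3$ solution enter. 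Once that representation is justified, everything else reduces to bounded-multiplier theory together with the elementary algebraic identities above.
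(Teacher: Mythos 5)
Your proof is correct and takes essentially the same approach as the paper: both derive the elliptic identities $(\mu+\lambda)\Delta u^j=F_{x_j}+(\mu+\lambda)\omega^{j,k}_{x_k}+(P-\tilde P)_{x_j}$ and $\mu\Delta\omega^{j,k}=g^j_{x_k}-g^k_{x_j}$ (your curl equation coincides with the paper's once the $\tfrac12|B|^2$ terms cancel and one uses $\divv B=0$), and then conclude via $L^p$-boundedness of the resulting double Riesz transforms by the Marcinkiewicz multiplier theorem, with \eqref{pointwise bound on rho finite time} absorbing the factor $\rho$ in $\rho\dot u$. The only cosmetic difference is that you first prove $\|\nabla u\|_{L^p}\le C(\|\divv u\|_{L^p}+\|\omega\|_{L^p})$ and then substitute $\divv u=(\mu+\lambda)^{-1}(F+P-\tilde P)$ algebraically, whereas the paper substitutes the definition of $F$ into the equation before applying the multiplier theorem.
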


\begin{proof} By the definition \eqref{definition of F} of $F$,
\begin{equation*}
(\mu+\lambda)\Delta u^j=F_{x_j}+(\mu+\lambda)\omega^{j,k}_{x_k}+(P-\tilde{P})_{x_j}.
\end{equation*}
Hence by differentiating and taking the Fourier transform on the above equation, we can apply Marcinkiewicz multiplier theorem and \eqref{bound on nabla u} follows.

For the case of $\nabla\omega$, by direct computation, we have
\begin{equation*}
\mu\Delta\omega=(\rho\dot{u}^j)_{x_k}-(\rho\dot{u}^k)_{x_j}-(\nabla B^{j}\cdot B)_{x_k}+(\nabla B^{k}\cdot B)_{x_j},
\end{equation*}
and using the same argument as for $\nabla u$, \eqref{bound on omega} follows immediately.
\end{proof}

We are now ready for giving the estimates for proving Theorem~\ref{a priori bound thm}. We begin with the following $L^2$ estimates on $(\rho,u,B)$ for all $t\in[0,T^*)$:

\begin{lem}\label{L2 estimate lem}
Assume that $\rho$ satisfies \eqref{pointwise bound on rho finite time}. For $t\in[0,T^*)$, we have
\begin{align}\label{L2 bound}
\sup_{0\le s\le t}\intox(|\rho-\tilde\rho|^2+\rho|u|^2+|B|^2)+\intoxt(|\nabla u|^2+|\nabla B|^2)\le C.
\end{align}
\end{lem}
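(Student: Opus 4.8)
The plan is to establish the basic energy estimate \eqref{L2 bound} by testing the equations against their natural multipliers and tracking the coupling between the velocity field $u$ and the magnetic field $B$. First I would derive control on the density term $\|\rho-\tilde\rho\|_{L^2}$: since the pressure satisfies \eqref{def of P} and $\rho$ is bounded above and below away from vacuum by \eqref{pointwise bound on rho finite time}, the potential energy density built from $P$ is comparable to $|\rho-\tilde\rho|^2$, so it suffices to bound the corresponding ``free energy'' functional. Concretely, I would introduce the relative potential $G(\rho)=\rho\int_{\tilde\rho}^{\rho} \frac{P(s)-P(\tilde\rho)}{s^2}\,ds$, which is nonnegative and satisfies $G(\rho)\sim|\rho-\tilde\rho|^2$ on the compact range of densities allowed by $M_0$; its time evolution, computed using the continuity equation \eqref{MHD1}, produces exactly the pressure-work term that will cancel against a term arising from the momentum equation.

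The main step is the combined kinetic-plus-magnetic energy identity. I would multiply the momentum equation \eqref{MHD2} by $u^j$, sum over $j$, and integrate over $\R^3$; multiply the induction equation \eqref{MHD3} by $B^j$, sum, and integrate; then add the two. Integration by parts on the viscous terms $\mu\Delta u^j$ and $\lambda\,\divv u_{x_j}$ yields the dissipation $\intox(\mu|\nabla u|^2 + \lambda(\divv u)^2)$, while the term $\nu\Delta B^j$ yields $\nu\intox|\nabla B|^2$, giving the dissipative integrals on the left of \eqref{L2 bound}. The crucial feature is that the Lorentz-force terms $(\frac12|B|^2)_{x_j}-\divv(B^jB)$ in \eqref{MHD2} and the transport terms $\divv(B^ju-u^jB)$ in \eqref{MHD3} are designed to cancel: after integration by parts, the magnetic forcing on the velocity equation and the velocity stretching in the induction equation combine to a total derivative, using $\divv B=0$ from \eqref{MHD4}. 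Together with the density identity from the previous paragraph, the pressure work $\intox P(\rho)\,\divv u$ cancels, leaving a closed differential inequality $\frac{d}{dt}E(t) + D(t)\le 0$ (or $\le$ lower-order terms) where $E$ is the total energy and $D$ the dissipation. Integrating in time and invoking the initial bound from \eqref{1.8} yields \eqref{L2 bound} with $C$ depending on the data, $M_0$, $\tilde\rho$ and the parameters.

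The hardest part will be the careful bookkeeping of the magnetic coupling terms to verify the exact cancellation, since the Lorentz force and the induction transport are nonlinear and only cancel after using $\divv B=0$ and matching boundary-free integrations by parts; a sign or constant error here would spoil the energy identity. A secondary subtlety is justifying that the relative potential $G(\rho)$ is genuinely comparable to $|\rho-\tilde\rho|^2$, which relies on $P'>0$ and the two-sided density bound \eqref{pointwise bound on rho finite time} rather than on any isothermal structure; because $P$ is merely $C^2$ with $P'>0$, I would use a Taylor expansion of $P$ about $\tilde\rho$ together with the compactness of the admissible density interval $[M_0^{-1},M_0]$ to obtain the two-sided comparison with constants absorbed into $C$. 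Once these two points are settled, the estimate \eqref{L2 bound} follows by a routine Gronwall argument.
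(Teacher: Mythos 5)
Your proposal is correct and follows essentially the same route as the paper: the paper's proof of this lemma is a one-line appeal to the ``standard energy balance equation'' (citing \cite{hoff95} and \cite{suen20b}), which is exactly the identity you construct --- kinetic plus magnetic energy plus the relative potential $G(\rho)$, with the pressure work cancelling against the evolution of $G$ and the Lorentz-force/induction coupling terms cancelling after integration by parts using $\divv B=0$. The only cosmetic point is that the balance is an exact identity, so no Gronwall step is needed once the two-sided comparison $G(\rho)\sim|\rho-\tilde\rho|^2$ on $[M_0^{-1},M_0]$ (from \eqref{def of P} and \eqref{pointwise bound on rho finite time}) is established.
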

\begin{proof}
The bound \eqref{L2 bound} follows from the standard energy balance equation and we refer to \cite{hoff95} or \cite{suen20b} for related discussions.
\end{proof}

Next we derive the following $L^6$ bounds for $u$ and $B$. Such bounds are crucial for obtaining higher order estimates on $u$ and $B$.
\begin{lem}\label{L6 bound lem}
Assume that the hypotheses and notations of  {\rm Theorem~\ref{a priori bound thm}} are in force. Then for any $0\le t< T^*$,
\begin{align}\label{L6 bound}
\sup_{0\le s\le t}\intox(|u|^6+|B|^6)\le C.
\end{align}
\end{lem}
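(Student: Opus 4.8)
The plan is to prove \eqref{L6 bound} by a weighted energy estimate at the sixth-power level. Concretely, I would multiply the momentum equation \eqref{MHD2} by $|u|^4u^j$ and the magnetic equation \eqref{MHD3} by $|B|^4B^j$, sum over $j$, integrate over $\R^3$, and add the two identities. Using the continuity equation \eqref{MHD1} to rewrite the convective part of \eqref{MHD2} as $\rho\dot u^j$, the mechanical time-derivative and convective terms collapse to $\frac16\frac{d}{dt}\intox\rho|u|^6$, while the corresponding terms in \eqref{MHD3} give $\frac16\frac{d}{dt}\intox|B|^6$; since $\rho$ and $\rho^{-1}$ are bounded by \eqref{pointwise bound on rho finite time}, controlling $\intox\rho|u|^6$ is equivalent to controlling $\intox|u|^6$.

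Next I would integrate the viscous and diffusive terms by parts. Differentiating the weights $|u|^4$ and $|B|^4$ produces, in addition to the leading dissipation $\intox|u|^4|\nabla u|^2$ and $\intox|B|^4|\nabla B|^2$, cross terms such as $\intox|u|^2|\nabla|u|^2|^2$ and $\intox|u|^4(\divv u)^2$. The hypothesis $\mu>4\lambda$ in \eqref{condition on vis} is exactly what guarantees that the quadratic form arising from $\mu\Delta u+\lambda\nabla\divv u$ is coercive, so that after a Young rearrangement one retains a genuinely negative contribution $-c\,\intox(|u|^4|\nabla u|^2+|B|^4|\nabla B|^2)$ with $c>0$ (the magnetic part is coercive for free since $\nu>0$). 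The pressure term is then handled by integrating by parts and using that $P(\rho)-P(\tilde\rho)$ is bounded, by \eqref{pointwise bound on rho finite time} together with the $C^2$-regularity \eqref{def of P}; interpolating $\|u\|_{L^4}$ between $L^2$ and $L^6$ dominates it by $\es\,\intox|u|^4|\nabla u|^2+C\intox|u|^6+C$.

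The heart of the matter is the coupling. The Lorentz terms $-(\frac12|B|^2)_{x_j}+\divv(B^jB)$ in \eqref{MHD2} and the induction terms $\divv(B^ju-u^jB)$ in \eqref{MHD3}, after integration by parts, produce expressions such as $\intox|u|^4B^kB^j\partial_{x_k}u^j$, $\intox(\divv u)|B|^6$ and mixed products of $u,B,\nabla u,\nabla B$. My strategy to control them is to exploit the fact that the weighted dissipation bounds high Lebesgue norms: since $|\nabla|u|^3|\le 3|u|^2|\nabla u|$, the Sobolev inequality \eqref{Lr bound general} with $r=6$ gives $\||u|^3\|_{L^6}^2\le C\,\intox|u|^4|\nabla u|^2$, i.e. the dissipation controls $\|u\|_{L^{18}}$ (and symmetrically $\|B\|_{L^{18}}$). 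Combining this with Hölder's and Young's inequalities, each coupling term should split into a small multiple of the dissipation $\intox(|u|^4|\nabla u|^2+|B|^4|\nabla B|^2)$, to be absorbed on the left, plus a term of the form $C\,G(t)\intox(|u|^6+|B|^6)$ with $G$ built from the first-order energy $\|\nabla u\|_{L^2}^2+\|\nabla B\|_{L^2}^2$, plus quantities already bounded through the basic energy estimate of Lemma~\ref{L2 estimate lem}.

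Collecting all contributions would yield a differential inequality of the schematic form
\[
\frac{d}{dt}\intox(\rho|u|^6+|B|^6)+c\,\intox(|u|^4|\nabla u|^2+|B|^4|\nabla B|^2)\le C\,G(t)\,\intox(|u|^6+|B|^6)+H(t),
\]
where $G$ and $H$ are integrable in time on $[0,t]$ thanks to the dissipation bound $\intoxt(|\nabla u|^2+|\nabla B|^2)\le C$ of Lemma~\ref{L2 estimate lem}. An application of Gronwall's inequality, together with the finiteness of the initial sixth moments guaranteed by \eqref{1.8}, then gives \eqref{L6 bound} with a constant depending only on $M_0$, $t$, $\tilde\rho$ and the system parameters. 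I expect the delicate point to be precisely the bookkeeping of the coupling terms: one must verify that in every Hölder/Gagliardo--Nirenberg splitting the surplus beyond the absorbable dissipation comes attached to a \emph{time-integrable} coefficient $G$, so that the Gronwall argument closes with a finite constant rather than producing a superlinear, potentially blowing-up right-hand side.
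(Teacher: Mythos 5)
Your plan is structurally identical to the paper's own proof: the paper likewise multiplies \eqref{MHD2} by $|u|^4u^j$ and \eqref{MHD3} by $|B|^4B^j$, integrates, observes that the hypothesis $\mu>4\lambda$ in \eqref{condition on vis} makes the coefficient $-24\lambda+6\mu$ of the weighted dissipation positive, and then disposes of the coupling terms with the sentence ``the rest of the analysis follows by a Gr\"onwall-type argument,'' deferring the computation to \cite{suenhoff12, suen20a}. So there is no difference in strategy. The issue is that the step you yourself flag as delicate --- that every coupling term splits as $\es\intox(|u|^4|\nabla u|^2+|B|^4|\nabla B|^2)$ plus $C\,G(t)\intox(|u|^6+|B|^6)$ with $G$ \emph{time-integrable}, built from $\|\nabla u\|_{L^2}^2+\|\nabla B\|_{L^2}^2$ --- cannot be carried out with the quantities available at this stage, and this is a genuine gap (one which the paper's two-line proof does not resolve either).

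Here is the concrete obstruction. Every coupling term, whether in the paper's form ($\intox|u|^5|B||\nabla B|$, $\intox|B|^5|B\cdot\divv(Bu^T-uB^T)|$) or after your integration by parts ($\intox|B|^2|u|^4|\nabla u|$, $\intox|B|^6|\nabla u|$, $\intox|u||B|^5|\nabla B|$), carries \emph{seven} field factors and one derivative; the weighted multipliers destroy the exact skew-symmetric cancellation that kills these terms in the unweighted energy balance. The quantities at hand are $D=\intox(|u|^4|\nabla u|^2+|B|^4|\nabla B|^2)$ (six fields, two derivatives), $Y=\intox(|u|^6+|B|^6)$ (six fields), $E=\intox(|\nabla u|^2+|\nabla B|^2)$ (two fields, two derivatives, with only $\int_0^tE\le C$ known from \eqref{L2 bound}), and $N=\intox(|u|^2+|B|^2)\le C$. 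Any bound $T\le CD^aY^bE^cN^d$ obtained from H\"older, \eqref{Lr bound general} and Young must be consistent with the scalings $(u,B)\mapsto\lambda(u,B)$ and $x\mapsto\mu x$, which force $6a+6b+2c+2d=7$ and $a+3b+c+3d=2$, hence $a=\tfrac34+d$ and $3b+c=\tfrac54-4d$. Absorbing $D^a$ by Young, a linear Gr\"onwall with integrable coefficient requires $b\le 1-a$ and $c\le 1-a$, i.e. $3b+c\le 4(1-a)=1-4d$; but $\tfrac54-4d>1-4d$. So every admissible splitting leaves either a superlinear term (e.g. $Y^{5/3}$, whose Gr\"onwall inequality gives only a local-in-time bound, not a bound up to arbitrary $t<T^*$) or a coefficient at least as large as $E^2=\|\nabla(u,B)\|_{L^2}^4$, which is not known to be time-integrable here --- exactly the failure mode you hoped to avoid. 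This deficit is not a technicality: it is precisely why the earlier criteria \cite{suen13a, suen15} retain $\|B\|_{L^\infty}$ in the blow-up criterion (with $|B|$ bounded the same terms drop to six fields and the identical count closes), and why the references \cite{suenhoff12, suen20a} cited for this computation can close it there --- they work with small initial energy, so the superlinear term can be absorbed, an option unavailable in the present large-data setting. Your proposal therefore needs a genuinely new ingredient at this point, beyond H\"older/Gagliardo--Nirenberg bookkeeping against the $L^2$ energy.
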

\begin{proof}
We follow the computations given in \cite{suenhoff12, suen20a} and obtain, for $t\in[0,T^*)$,
\begin{align*}
&\intox(|u|^6+|B|^6)\Big|_{s=0}^t+6\int_0^t\intox(\mu|u|^2|\nabla u|^2+\nu|B|^2|\nabla B|^2)\\
&\qquad+(-24\lambda+6\mu)\int_0^t\intox|u|^2|\nabla(|u|^2)|^2+6\nu\int_0^t\intox|B|^2|\nabla)|B|^2)|^2\\
&\le C\Big[\int_0^t\intox|\rho-\tilde\rho||\divv(|u|^4u)|+\int_0^t\intox|u|^4|u||\divv(BB^T)|\Big]\\
&\qquad+C\Big[\int_0^t\intox|u|^4|u||\nabla(\frac{1}{2}|B|^2)|+\int_0^t\intox|B|^4|B\cdot\divv(Bu^T-uB^T)|\Big].
\end{align*}
By the assumption \eqref{condition on vis}, the term involving $(-24\lambda+6\mu)$ is positive. The rest of the analysis follows by a Gr\"{o}nwall-type argument and we omit the details here. 
\end{proof}
With the help of \eqref{L2 bound} and \eqref{L6 bound}, we can start estimating the functionals $\Phi_1$ and $\Phi_2$ appeared in \eqref{bounds on phi 1 and 2}. We first consider $\Phi_1$ and $\Phi_4$:
\begin{lem}\label{bound on A1 and Phi1 lem}
Assume that the hypotheses and notations of  {\rm Theorem~\ref{a priori bound thm}} are in force. Then for any $0\le t< T^*$,
\begin{align}\label{bound on A1 and Phi1}
\Phi_1(t)+\Phi_4(t)\le C.
\end{align}
\end{lem}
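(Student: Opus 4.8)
The plan is to derive two first-order energy identities---one for the velocity, obtained by testing the momentum equation \eqref{MHD2} against the material derivative $\dot u$, and one for the magnetic field, obtained by testing \eqref{MHD3} against $B_t$---and then to close the resulting inequality by estimating the cubic gradient functional $\Phi_4$ through the effective-viscous-flux bounds of Lemmas~\ref{estimate on effective viscous flux} and~\ref{estimate on nabla u lemma}.

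First I would multiply \eqref{MHD2} by $\dot u^j$ and integrate over $\R^3\times[0,t]$. Integrating the viscous terms by parts yields the leading energy $\frac{\mu}{2}\intox|\nabla u|^2+\frac{\lambda}{2}\intox(\divv u)^2$ at time $t$ on the favourable side together with $\intoxt\rho|\dot u|^2$, the price being error terms that are cubic in $\nabla u$ (these arise from commuting spatial derivatives with $\dot u^j=u^j_t+u\cdot\nabla u^j$) and are dominated by $C\Phi_4(t)$. The pressure contribution $-\intoxt\nabla P\cdot\dot u$ is split using $\dot u=u_t+u\cdot\nabla u$: the $u_t$-part is integrated by parts in time and rewritten via the continuity equation \eqref{MHD1} and the bound \eqref{pointwise bound on rho finite time}, leaving a boundary term $\intox(P-\tilde P)\divv u$ absorbed by $\epsilon\intox|\nabla u|^2+C$ (here $\|P-\tilde P\|_{L^2}\le C$ by Lemma~\ref{L2 estimate lem}), while the convective part is cubic. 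The magnetic forcing $\intoxt\big[(\tfrac12|B|^2)_{x_j}-\divv(B^jB)\big]\dot u^j$ is integrated by parts and bounded by $\epsilon\intoxt\rho|\dot u|^2+C\intoxt|B|^2|\nabla B|^2$, where $\intox|B|^2|\nabla B|^2\le\|B\|_{L^6}^2\|\nabla B\|_{L^3}^2\le C\|\nabla B\|_{L^3}^2$ by Lemma~\ref{L6 bound lem}. Symmetrically, testing \eqref{MHD3} against $B_t$ gives $\frac{\nu}{2}\intox|\nabla B|^2$ at time $t$ and $\intoxt|B_t|^2$; using $\divv B=0$ one writes $\divv(B^ju-u^jB)=u\cdot\nabla B^j+B^j\divv u-B\cdot\nabla u^j$, whose $L^2$ norm is $\le C(\|\nabla u\|_{L^3}+\|\nabla B\|_{L^3})$ by the $L^6$ bounds of Lemma~\ref{L6 bound lem}. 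After absorbing the $\epsilon$-terms and folding the surviving $L^3$-gradient contributions into $\Phi_4$ by Young's inequality, these two identities combine to give $\Phi_1(t)\le C+C\Phi_4(t)$.

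The crux is to bound $\Phi_4$ back in terms of $\Phi_1$. For the velocity I would invoke the elliptic structure: Lemma~\ref{estimate on nabla u lemma} with \eqref{Lr bound general} gives $\|\nabla u\|_{L^6}\le C(\|F\|_{L^6}+\|\omega\|_{L^6}+\|(P-\tilde P)\|_{L^6})\le C(\|\nabla F\|_{L^2}+\|\nabla\omega\|_{L^2}+1)$, the last step using the Sobolev embedding $\|w\|_{L^6}\le C\|\nabla w\|_{L^2}$ (the case $r=6$ of \eqref{Lr bound general}) and $\|P-\tilde P\|_{L^6}\le C$, which by Lemmas~\ref{estimate on effective viscous flux} and~\ref{estimate on nabla u lemma} is $\le C(\|\dot u\|_{L^2}+\|B\nabla B\|_{L^2}+1)$; interpolation then yields $\|\nabla u\|_{L^3}^3\le C\|\nabla u\|_{L^2}^{3/2}\|\nabla u\|_{L^6}^{3/2}$. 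For the magnetic field I would use the parabolic regularity of \eqref{MHD3}, bounding $\|\nabla^2 B\|_{L^2}\le C\|\Delta B\|_{L^2}\le C(\|B_t\|_{L^2}+\|\nabla u\|_{L^3}+\|\nabla B\|_{L^3})$ and, via \eqref{Lr bound general}, $\|\nabla B\|_{L^3}^3\le C\|\nabla B\|_{L^2}^{3/2}\|\nabla^2 B\|_{L^2}^{3/2}$. Substituting these into $\Phi_4$ and using the global energy bound $\intoxt(|\nabla u|^2+|\nabla B|^2)\le C$ from Lemma~\ref{L2 estimate lem} to control the powers of $\sup_{0\le s\le t}\|\nabla u\|_{L^2}$ that appear, the aim is to arrive, after Young's inequality, at an estimate of the form $\Phi_4(t)\le\epsilon\,\Phi_1(t)+C$.

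Combining $\Phi_1\le C+C\Phi_4$ with such a bound and choosing $\epsilon$ small would then close the estimate and yield \eqref{bound on A1 and Phi1}, with $C$ permitted to depend on $t$ through a Gr\"onwall argument. I expect the main obstacle to lie precisely in this closure: the interpolation of the cubic terms naturally produces \emph{superlinear} powers of $\sup_{0\le s\le t}\|\nabla u\|_{L^2}^2$, and the velocity--magnetic coupling terms (notably $\intox|B|^2|\nabla B|^2$, which anticipates the functional $\Phi_5$) do not separate cleanly. The delicate point is to balance all interpolation exponents so that every dangerous term carries either a small coefficient absorbable into $\Phi_1$ or a factor of the time-integrable quantity $\|\nabla u\|_{L^2}^2+\|\nabla B\|_{L^2}^2$, crucially exploiting the $L^6$ bounds of Lemma~\ref{L6 bound lem} (which themselves rest on the condition $\mu>4\lambda$ in \eqref{condition on vis}) to tame the magnetic nonlinearities.
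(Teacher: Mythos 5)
Your first step is sound and coincides with the paper's: testing \eqref{MHD2} against $\dot u$ and \eqref{MHD3} against $B_t$ is exactly how the paper arrives at \eqref{2.17}--\eqref{2.19}, and your treatment of the magnetic coupling terms via H\"older and the $L^6$ bound of Lemma~\ref{L6 bound lem} is a legitimate (slightly coarser) version of what the paper does there. The genuine gap is in your closure. You reduce the lemma to the pair of inequalities $\Phi_1\le C+C\Phi_4$ and (hoped for) $\Phi_4\le\epsilon\,\Phi_1+C$, and the second inequality is unobtainable by the route you describe. Follow your own chain: $\|\nabla u\|_{L^3}^3\le C\|\nabla u\|_{L^2}^{3/2}\|\nabla u\|_{L^6}^{3/2}$ with $\|\nabla u\|_{L^6}\le C(\|\dot u\|_{L^2}+\|B\nabla B\|_{L^2}+1)$ from Lemma~\ref{estimate on nabla u lemma} and \eqref{Lr bound general}. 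Integrating in time, H\"older forces a factor $\bigl(\intoxt|\dot u|^2\bigr)^{3/4}\le C\Phi_1^{3/4}$ against a factor $\bigl(\int_0^t\|\nabla u\|_{L^2}^6\bigr)^{1/4}\le C\bigl(\sup_s\|\nabla u\|_{L^2}^4\int_0^t\|\nabla u\|_{L^2}^2\bigr)^{1/4}\le C\Phi_1^{1/2}$, where the last step uses the energy bound \eqref{L2 bound}; the total is $C\Phi_1^{5/4}$. This superlinear exponent is not an artifact of clumsy bookkeeping: $\sup_s\|F\|_{L^2}\sim\Phi_1^{1/2}$ and $\int_0^t\|\nabla F\|_{L^2}^2\lesssim\Phi_1$ is all the regularity available at this stage, and no Young-type manipulation converts a bound $\Phi_4\le C\Phi_1^{5/4}$ into $\Phi_4\le\epsilon\,\Phi_1+C$ (the inequality $X^{5/4}\le\epsilon X+C$ simply fails for large $X$). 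You recognise this obstacle yourself in your final paragraph, but you leave it unresolved; since this is precisely the step that makes the lemma nontrivial, the proposal does not amount to a proof.

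What the paper does differently --- and what any repair of your scheme must imitate --- is to prevent $\Phi_4$ from entering the $\Phi_1$-inequality with full power. The coupling terms in \eqref{2.19} are estimated not by $C\Phi_4^{2/3}$ (or $C\Phi_4$) but by $C\Phi_1^{5/6}+C\Phi_1^{1/6}\Phi_4^{1/3}$, while separately \eqref{bound on A1 by Phi1} gives $\Phi_4\le C\Phi_1^{5/4}+C\Phi_1^{3/4}+C\Phi_1^{1/2}+C$. Because $\Phi_4$ appears only with exponent $1/3$, its superlinear dependence on $\Phi_1$ is compensated upon substitution: one gets powers $\Phi_1^{5/6}$ and $\Phi_1^{1/6+5/12}=\Phi_1^{7/12}$, all strictly below one, so Young's inequality closes the loop and yields $\Phi_1\le C$, and then $\Phi_4\le C$. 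In your framework, where $\Phi_4$ enters linearly, closure would require $\Phi_4\lesssim\Phi_1^{\theta}$ with $\theta<1$, which is out of reach; in the paper's framework one only needs $\theta<3$, and $\theta=5/4$ suffices. (A caveat: the commutator term $\intoxt|\nabla u|^3$ enters \eqref{2.17} with a non-small constant, so \eqref{2.19} does contain a genuine $C\Phi_4$, which the displayed inequality \eqref{bound on Phi1 by A1} does not carry; a fully airtight write-up must estimate that term, too, so that it contributes only sublinear composite powers of $\Phi_1$. That refinement stays within the paper's fractional-power structure, whereas your linear $\epsilon$-absorption cannot be patched without adopting that structure wholesale.)
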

\begin{proof}
We multiply \eqref{MHD2} by $\dot{u}^j$, sum over $j$ and integrate to get
\begin{align}\label{2.17}
&\int_{\R^3}|\nabla u|^2+\int_0^t\int_{\R^3}\rho|\dot u|^2\notag\\
&\qquad\le C+\left|\int_0^t\int_{\R^3}[\dot u\cdot\nabla(\frac{1}{2}|B|^2)-\dot{u}\cdot\divv(BB^T)]\right|+\int_0^t\int_{\R^3}|\nabla u|^3.
\end{align}
Niext we multiply \eqref{MHD3} by $B_t$ and integrate,
\begin{align}\label{2.18}
\int_{\R^3}|\nabla B|^2+\int_0^t\int_{\R^3}|B_t|^2
\le C+\left|\int_0^t\int_{\R^3}B_t \cdot\divv(uB^T-u^TB)\right|.
\end{align}
Adding \eqref{2.17} and \eqref{2.18}, we obtain
\begin{align}\label{2.19}
&\int_{\R^3}(|\nabla u|^2+|\nabla B|^2)+\int_0^t\int_{\R^3}(\rho|\dot u|^2+|B_t|^2)\notag\\
&\le C+C\Phi_4+C\int_0^t\int_{\R^3}(|\nabla B|^2|B|^2+|\nabla u|^2|B|^2+|\nabla B|^2|u|^2).
\end{align}
To bound the last inequality on the right side of \eqref{2.19}, we bound the term $\intoxt|\nabla B|^2|B|^2d$ as an example. Using \eqref{Lr bound general} and the bound \eqref{L6 bound}, we arrive at
\begin{align*}
\intoxt|\nabla B|^2|B|^2&\le\Big(\intoxt|\nabla B|^3\Big)^\frac{2}{3}\Big(\intoxt|B|^6\Big)^\frac{1}{3}\\
&\le C\Big(\int_0^t\Big(\intox|\Delta B|^2\Big)^\frac{3}{4}\Big(\intox|\nabla B|^2\Big)^\frac{3}{4}\Big)^\frac{2}{3}\\
&\le C\Big(\int_0^t\Big(\intox(|B_t|^2+|\nabla B|^2|u|^2+|\nabla u|^2|B|^2)\Big)^\frac{3}{4}\Big(\intox|\nabla B|^2\Big)^\frac{3}{4}\Big)^\frac{2}{3}\\
&\le C\Big(\sup_{0\le s\le t}\intox|\nabla B|^2\Big)^\frac{1}{3}\Big(\intoxt|B_t|^2\Big)^\frac{1}{2}\Big(\intoxt|\nabla B|^2\Big)^\frac{1}{6}\\
&\qquad+C\Big(\int_0^t\Big(\intox|\nabla B|^3\Big)^\frac{1}{2}\Big(\intox|u|^6\Big)^\frac{1}{4}\Big(\intox|\nabla B|^2\Big)^\frac{3}{4}\Big)^\frac{2}{3}\\
&\qquad+C\Big(\int_0^t\Big(\intox|\nabla u|^3\Big)^\frac{1}{2}\Big(\intox|B|^6\Big)^\frac{1}{4}\Big(\intox|\nabla B|^2\Big)^\frac{3}{4}\Big)^\frac{2}{3}\\
&\le C\Phi_1^\frac{5}{6}+C\Phi_1^\frac{1}{6}\Phi_4^\frac{1}{3}.
\end{align*}
The estimates on $\intoxt|\nabla u|^2|B|^2$ and $\intoxt|\nabla B|^2|u|^2$ are just similar, and we deduce that
\begin{align}\label{bound on Phi1 by A1}
\Phi_1\le C\Phi_1^\frac{5}{6}+C\Phi_1^\frac{1}{6}\Phi_4^\frac{1}{3}.
\end{align}
It remains to estimate the functional $\Phi_4$. Using \eqref{Lr bound general}, we can estimate the integral of $|\nabla B|^3$ as follows.
\begin{align*}
\intoxt|\nabla B|^3&\le C\int_0^t\Big(\intox|\Delta B|^2\Big)^\frac{3}{4}\Big(\intox|\nabla B|^2\Big)^\frac{3}{4}\\ 
&\le C\int_0^t\Big(\intox(|B_t|^2+|\nabla B|^2|u|^2+|\nabla u|^2|B|^2)\Big)^\frac{3}{4}\Big(\intox|\nabla B|^2\Big)^\frac{3}{4}\\
&\le C\Big(\sup_{0\le s\le t}\intox|\nabla B|^2\Big)^\frac{1}{2}\Big(\intoxt|B_t|^2\Big)^\frac{3}{4}\Big(\intoxt |\nabla B|^2\Big)^\frac{1}{4}\\
&\qquad+C\int_0^t\Big(\intox|\nabla B|^3\Big)^\frac{1}{2}\Big(\intox|u|^6\Big)^\frac{1}{4}\Big(\intox|\nabla B|^2\Big)^\frac{3}{4}\\
&\qquad+C\int_0^t\Big(\intox|\nabla u|^3\Big)^\frac{1}{2}\Big(\intox|B|^6\Big)^\frac{1}{4}\Big(\intox|\nabla B|^2\Big)^\frac{3}{4}\\
&\le C\Phi_1^\frac{5}{4}+C\Phi_1^\frac{1}{4}\Phi_4^\frac{1}{2}.
\end{align*}
On the other hand, using \eqref{Lr bound general}, \eqref{bound on nabla u}, \eqref{bound on omega} and \eqref{L2 bound}, the integral of $|\nabla u|^3$ can be estimated as follows.
\begin{align*}
&\intoxt |\nabla u|^3\\
&\le C\intoxt (|F|^3+|\omega|^3+|P-\tilde P|^3)\\
&\le C\int_0^t \Big(\intox|F|^2\Big)^\frac{3}{4}\Big(\intox|\nabla F|^2\Big)^\frac{3}{4}+C\int_0^t \Big(\intox|\omega|^2\Big)^\frac{3}{4}\Big(\intox|\nabla \omega|^2\Big)^\frac{3}{4}+C\\
&\le C\Big(\sup_{0\le s\le t}\intox|\nabla u|^2\Big)^\frac{1}{2}\Big(\intoxt |\dot{u}|^2\Big)^\frac{3}{4}+C\Big(\intoxt |\dot{u}|^2\Big)^\frac{3}{4}\\
&\qquad+C\Big(\sup_{0\le s\le t}\intox|\nabla u|^2\Big)^\frac{1}{4}\Big(\intoxt |\nabla B|^3\Big)^\frac{1}{2}\\
&\qquad+C\Big(\intoxt |\nabla B|^3\Big)^\frac{1}{2}+C\\
&\le C\Phi_1^\frac{5}{4}+C\Phi_1^\frac{3}{4}+C\Phi_1^\frac{1}{4}\Phi_4^\frac{1}{2}+C\Phi_4^\frac{1}{2}+C.
\end{align*}
Therefore, we obtain the following bound on $\Phi_4$ in terms of $\Phi_1$:
\begin{align}\label{bound on A1 by Phi1}
\Phi_4\le C\Phi_1^\frac{5}{4}+C\Phi_1^\frac{3}{4}+C\Phi_1^\frac{1}{2}+C
\end{align}
Combining \eqref{bound on Phi1 by A1} with \eqref{bound on A1 by Phi1}, the result \eqref{bound on A1 and Phi1} follows.
\end{proof}

Next, we derive the following estimates for $\Phi_2$ in terms of $\Phi_3$ and $\Phi_5$:

\begin{lem}\label{bound on Phi2 Phi3 A2 lem} 
Assume that the hypotheses and notations of  {\rm Theorem~\ref{a priori bound thm}} are in force. Then for any $0\le t<T^*$,
\begin{align}\label{bound on Phi2 Phi3 A2}
\Phi_2(t)+\Phi_3(t)+\Phi_5(t)\le C.
\end{align}
\end{lem}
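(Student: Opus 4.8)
The overall strategy is to obtain a differential inequality for the highest-order energy $\sup_s\intox(|\dot u|^2+|B_t|^2)$ together with its dissipation $\intoxt(|\nabla\dot u|^2+|\nabla B_t|^2)$, i.e.\ for $\Phi_2$, and to couple it with the estimates for $\Phi_3$ and $\Phi_5$. The three functionals are genuinely intertwined: the commutators generated when we differentiate the momentum equation are controlled by $\Phi_3$, the magnetic coupling forces the appearance of $\Phi_5$, and both $\Phi_3$ and $\Phi_5$ are in turn estimated back in terms of $\Phi_1$ and $\Phi_2$ through the elliptic bounds of Lemmas~\ref{estimate on effective viscous flux} and \ref{estimate on nabla u lemma}. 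The lower-order bound $\Phi_1(t)+\Phi_4(t)\le C$ from Lemma~\ref{bound on A1 and Phi1 lem}, together with the bounds \eqref{L2 bound} and \eqref{L6 bound}, then serves as the data that drives a final Gr\"onwall argument.

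For the velocity, I would rewrite \eqref{MHD2} as $\rho\dot u^j=F_{x_j}+\mu\omega^{j,k}_{x_k}-(\frac12|B|^2)_{x_j}+\divv(B^jB)$ and apply the material-type operator $\partial_t(\cdot)+\divv(u\,\cdot)$ to both sides. By the mass equation \eqref{MHD1} the left-hand side becomes $\rho\ddot u^j$, while differentiating the viscous part reproduces $\mu\Delta\dot u^j+\lambda(\divv\dot u)_{x_j}$ modulo commutators that are schematically of the form $\nabla u\,\nabla^2 u$ and $\nabla u\,\nabla\dot u$. Testing against $\dot u^j$, integrating over $\R^3$, and integrating by parts produces the dissipation $\intox(\mu|\nabla\dot u|^2+(\mu+\lambda)(\divv\dot u)^2)$ on the left, against a right-hand side consisting of commutator terms bounded by $\intox|\nabla u|^2|\nabla\dot u|$, pressure terms handled by \eqref{L2 bound} and the $C^2$-regularity \eqref{def of P} of $P$, and magnetic terms of the type $\intox|B||\nabla B||\nabla\dot u|$ and $\intox|B_t||\nabla B||\dot u|$. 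A parallel computation for the magnetic field --- differentiating \eqref{MHD3} in $t$ and testing against $B_t^j$ --- yields $\frac{d}{dt}\intox|B_t|^2+\nu\intox|\nabla B_t|^2$ on the left and, after integration by parts, terms of the form $\intox|\nabla B_t|(|B_t||u|+|B||\dot u|)$ on the right. I expect the sign restriction \eqref{condition on vis}, $\mu>4\lambda$, to reappear here, exactly as it did in Lemma~\ref{L6 bound lem}, to keep the relevant quadratic form coercive.

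Adding the two estimates and applying Young's inequality, I would absorb all genuinely top-order contributions $\intox|\nabla\dot u|^2$ and $\intox|\nabla B_t|^2$ into the left-hand side; what survives is controlled, after using \eqref{L6 bound}, by $\Phi_3$, by $\Phi_5$, and by quantities already bounded through $\Phi_1$ and $\Phi_4$. To close the loop I would then estimate $\Phi_3$ and $\Phi_5$ from above: using \eqref{bound on nabla u}, the Gagliardo--Nirenberg inequality \eqref{Lr bound general} with $r=4$, and the flux bounds \eqref{bound on F}--\eqref{bound on nabla F}, one writes $\|\nabla u\|_{L^4}^4$ in terms of $\|F\|_{L^2}$, of $\|\nabla F\|_{L^2}\le C(\|\dot u\|_{L^2}+\|B\nabla B\|_{L^2})$, and of the analogous bounds for $\omega$, so that $\Phi_3$ is dominated by a product of powers of $\Phi_1$, $\Phi_2$ and $\Phi_5$; the magnetic part of $\Phi_3$ is treated the same way through the $B$-equation, estimating $\|\Delta B\|_{L^2}$ by $\|B_t\|_{L^2}$ and the coupling terms. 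The functional $\Phi_5$ is handled by interpolation, bounding e.g.\ $\intox|\nabla B|^2|B|^2\le\|\nabla B\|_{L^3}^2\|B\|_{L^6}^2$ and invoking \eqref{L6 bound}. Substituting these bounds back, choosing the Young constants small, and applying Gr\"onwall's inequality with the data $\Phi_1+\Phi_4\le C$ then yields \eqref{bound on Phi2 Phi3 A2}.

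The main obstacle is the bookkeeping of the magnetic coupling: the terms carrying $B$ must be split so that every factor of $\nabla\dot u$, $\nabla B_t$, $\nabla^2 u$ or $\nabla^2 B$ is paired, via Young's inequality, either with an absorbable top-order term or with a power of $\Phi_5$ and the uniformly bounded $L^6$-norms, while the remaining interpolation exponents must sum so that the resulting integral inequality is of Gr\"onwall type, with no super-critical power of $\Phi_2$ on the right. Getting these exponents to balance --- and in particular keeping the coefficient of the absorbed $\intox(|\nabla\dot u|^2+|\nabla B_t|^2)$ strictly below its counterpart on the left under the constraint \eqref{condition on vis} --- is the delicate point of the argument.
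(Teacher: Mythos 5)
Your proposal is correct and follows essentially the same route as the paper's proof: the convective-derivative energy estimate for $\dot u$, the time-differentiated induction equation tested against $B_t$, control of $\Phi_3$ via the elliptic bounds for $F$ and $\omega$ from Lemmas~\ref{estimate on effective viscous flux} and \ref{estimate on nabla u lemma}, interpolation for $\Phi_5$, and closure by ensuring every power of $\Phi_2+\Phi_5$ on the right is strictly below one. Two minor corrections: the paper closes the resulting system purely algebraically (Young's inequality applied to the powers $\tfrac56$, $\tfrac12$ and $s'<1$ in \eqref{H2 bound step 2}, \eqref{bound on Phi3 by A2}, \eqref{bound on A2 by Phi2}), with no Gr\"onwall argument needed since the inequalities are pointwise in $t$; and the restriction $\mu>4\lambda$ in \eqref{condition on vis} plays no role in this lemma --- it is used only in Lemma~\ref{L6 bound lem}, whose conclusion enters here as given data.
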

\begin{proof}
Taking the convective derivative in the momentum equation \eqref{MHD2}, multiplying it by $\dot{u}^j$, summing over $j$ and integrating, we obtain
\begin{align}\label{H2 bound step 0}
&\sup_{0\le s\le t}\int_{\R^3}|\dot{u}|^2+\int_0^t\int_{\R^3}|\nabla\dot u|^2\notag\\
&\qquad\qquad\le C+C\Phi_3+C\int_0^t\int_{\R^3}|B|^2(|B_t|^2+|u|^2|\nabla B|^2).
\end{align}
Next we differentiate the magnetic field equation \eqref{MHD3} with respect to $t$, multiply by $B_t$ and integrate,
\begin{align*}
&\left.\frac{1}{2}\int_{\R^3}|B_t|^2\right|_0^t+ \nu\int_{0}^{t}\int_{\R^3}|\nabla B_t|^2 \\
&\qquad\qquad\qquad=-\int_{0}^{t}\int_{\R^3} B_t\cdot[\divv(B u^{T}-u B^{T})]_{t} .
\end{align*}
Adding the above to \eqref{H2 bound step 0} and absorbing terms,
\begin{align}\label{H2 bound step 1}
&\Phi_2\le C\left[\Phi_3+\int_0^t\int_{\R^3}|B|^2|u|^2(|\nabla u|^2+|\nabla B|^2)+1\right]\notag\\
&\qquad\qquad\qquad\qquad+C\int_0^t\int_{\R^3}(|B|^2|B_t|^2+|B|^2|\dot u|^2+|B_t|^2|u|^2).
\end{align}
We first consider the last integral on the right side of \eqref{H2 bound step 1}. To bound the term $\intoxt |B|^2|B_t|^2$, using the bound \eqref{bound on A1 and Phi1}, we have
\begin{align*}
&\intoxt |B|^2|B_t|^2\\
&\le\Big(\intoxt|B|^6\Big)^\frac{1}{3}\Big(\intoxt |B_t|^3\Big)^\frac{2}{3}\\
&\le C\Big(\sup_{0\le s\le t}\intox|B_t|^2\Big)^\frac{1}{3}\Big(\intoxt |B_t|^2\Big)^\frac{1}{6}\Big(\intoxt |\nabla B_t|^2\Big)^\frac{1}{2}\\
&\le C\Phi_1^\frac{1}{6}\Phi_2^\frac{5}{6}\le C\Phi_2^\frac{5}{6},
\end{align*}
and the terms $\intoxt |u|^2|B_t|^2$ and $\intoxt |B|^2|\dot{u}|^2$ can be treated in a similar way. To bound the third integral on the right side of \eqref{H2 bound step 1}, we have
\begin{align*}
&\intoxt |B|^2|u|^2(|\nabla u|^2+|\nabla B|^2)\\
&\le \intoxt (|\nabla u|^4+|\nabla B|^4)+\intoxt (|B|^8+|u|^8)\\
&\le \Phi_3+\intoxt (|B|^8+|u|^8).
\end{align*}
Using the bounds \eqref{Lr bound general} and  \eqref{L infty bound general}, the term $\intoxt |u|^8$ can be estimated as follows.
\begin{align*}
&\intoxt |u|^8\\
&\le\Big(\sup_{0\le s\le t}\intox|u|^4\Big)\Big(\int_0^t\|u|^4_{L^\infty}\Big)\\
&\le C\Big(\sup_{0\le s\le t}\intox|u|^2\Big)^\frac{1}{2}\Big(\sup_{0\le s\le t}\intox|u|^6\Big)^\frac{1}{2}\Big(\int_0^t\intox(|u|^4+|\nabla u|^4)\Big)\\
&\le C\Big[\int_0^t\Big(\intox|u|^2\Big)^\frac{1}{2}\Big(\intox|\nabla u|^2\Big)^\frac{3}{2}+\Phi_3\Big]\\
&\le C\Big(\Phi_3+1\Big).
\end{align*}
The term $\intoxt |B|^8$ can be estimated in a similar way to get
\begin{align*}
\intoxt |B|^2|u|^2(|\nabla u|^2+|\nabla B|^2)\le C\Big(\Phi_3+1\Big),
\end{align*}
and we obtain from \eqref{H2 bound step 1} that
\begin{align}\label{H2 bound step 2}
\Phi_2\le C\left(\Phi_3+1\right)+ C\Phi_2^\frac{5}{6}.
\end{align}
It remains to estimate the functionals $\Phi_3$ and $\Phi_5$. Using \eqref{bound on nabla u} and \eqref{L2 bound}, we have
\begin{align}\label{bound on L4 of nabla u}
\intoxt|\nabla u|^4\le C\intoxt(|F|^4+|\omega|^4)+C.
\end{align}
Using \eqref{bound on A1 and Phi1}, the integral on $|F|^4$ can be bounded by
\begin{align*}
\intoxt|F|^4&\le C\Big(\sup_{0\le s\le t}\intox|F|^2\Big)^\frac{1}{2}\Big(\sup_{0\le s\le t}\intox|\nabla F|^2\Big)^\frac{1}{2}\Big(\intoxt|\nabla F|^2\Big)\notag\\
&\le C\Big(\sup_{0\le s\le t}\intox(|\dot{u}|^2+|\nabla B|^2|B|^2\Big)^\frac{1}{2}\Big(\intoxt(|\dot{u}|^2+|\nabla B|^2|B|^2)\Big)^\frac{1}{2}\notag\\
&\le C(\Phi_2+\Phi_5)^\frac{1}{2},
\end{align*}
and the estimates on $\omega$ is just similar. For $\intoxt |\nabla B|^4$, we estimate it as follows.
\begin{align*}
\intoxt |\nabla B|^4&\le C\int_0^t\Big(\intox|\nabla B|^2\Big)^\frac{1}{2}\Big(\intox|\Delta B|^2\Big)^\frac{3}{2}\\
&\le C\Big(\sup_{0\le s\le t}\intox|\Delta B|^2\Big)^\frac{1}{2}\Big(\sup_{0\le s\le t}\intox|\nabla B|^2\Big)^\frac{1}{2}\Big(\intoxt |\Delta B|^2\Big)\\
&\le C\Big(\Phi_2+\Phi_5\Big)^\frac{1}{2}.
\end{align*}
Hence we have
\begin{align}\label{bound on Phi3 by A2}
\Phi_3\le C(\Phi_2+\Phi_5)^\frac{1}{2}+C.
\end{align}
To bound $\Phi_5$, for $r\in (3,6)$, we apply \eqref{L infty bound general} to obtain
\begin{align*}
&\intox|\nabla u|^2|B|^2\\
&\le C\|B(\cdot,t)\|^2_{L^\infty}\Big(\intox|\nabla u|^2\Big)\\
&\le C\Big(\intox|B|^r+\intox|\nabla B|^r\Big)^\frac{2}{r}\Big(\intox|\nabla u|^2\Big).
\end{align*}
The term $\|B\|_{L^r}$ can be bounded by the $L^2-L^6$ interpolation on $B$. Hence using \eqref{Lr bound general} and the bound \eqref{bound on A1 and Phi1}, we obtain
\begin{align*}
\intox|\nabla u|^2|B|^2&\le C+C\Big(\intox|\nabla u|^2\Big)\Big(\intox|\nabla B|^2\Big)^\frac{6-r}{2r}\Big(\intox|\Delta B|^2\Big)^\frac{3r-6}{2r}\\
&\le C+C\Big(\intox|\nabla u|^2\Big)\Big(\intox|\nabla B|^2\Big)^{1-s'}\Big(\intox|\Delta B|^2\Big)^{s'}\\
&\le C+C\Big(\intox|B_t|^2+\intox(|\nabla B|^2|u|^2+|\nabla u|^2|B|^2)\Big)^{s'}\\
&\le C+C\Big(\Phi_2+\Phi_5\Big)^{s'},
\end{align*}
where $s':=\frac{3r-6}{2r}$ and $s'\in(0,1)$. By similar method, we obtain
\begin{align*}
\intox|\nabla B|^2|B|^2&\le \Big(\intox|\nabla B|^2\Big)\|B(\cdot,t)\|^2_{L^\infty}\\
&\le C+C\Big(\Phi_2+\Phi_5\Big)^{s'}.
\end{align*}
For the term $\intox|\nabla B|^2|u|^2$, we have
\begin{align*}
\intox|\nabla B|^2|u|^2&\le \Big(\intox|\nabla B|^2\Big)\|u(\cdot,t)\|^2_{L^\infty}\\
&\le C\Big(\intox|\nabla B|^2\Big)\Big(\intox|u|^r+\intox|\nabla u|^r\Big)^\frac{2}{r}.
\end{align*}
Similar to the case for $B$, the term $\|u\|_{L^r}$ can be bounded by the $L^2-L^6$ interpolation on $u$. For the term $\|\nabla u\|_{L^r}$, we can apply \eqref{bound on nabla u} with the bounds \eqref{bound on F}-\eqref{bound on nabla F} to get
\begin{align*}
\Big(\intox|\nabla u|^r\Big)^\frac{2}{r}&\le C\Big(\intox(|F|^r+|\omega|^r+|\rho-\tilde\rho|^r)\Big)^\frac{2}{r}\\
&\le C\Big(\intox|F|^2\Big)^{1-s'}\Big(\intox|\nabla F|^2\Big)^{s'}\\
&\qquad+C\Big(\intox|\omega|^2\Big)^{1-s'}\Big(\intox|\nabla \omega|^2\Big)^{s'}+C\Big(\intox|\rho-\tilde\rho|^r\Big)^\frac{2}{r}\\
&\le C+C\Big(\Phi_2+\Phi_5\Big)^{s'}.
\end{align*}
Therefore, we obtain the following bound on $\Phi_5$:
\begin{align}\label{bound on A2 by Phi2}
\Phi_5\le C+C\Big(\Phi_2+\Phi_5\Big)^{s'}.
\end{align}
We combine \eqref{H2 bound step 2}, \eqref{bound on Phi3 by A2} and \eqref{bound on A2 by Phi2} to conclude that
\begin{align*}
\Phi_2\le C,
\end{align*}
which can be further applied on \eqref{bound on Phi3 by A2} and \eqref{bound on A2 by Phi2} to give
\begin{align*}
\Phi_3+\Phi_5\le C.
\end{align*}
Hence the result \eqref{bound on Phi2 Phi3 A2} follows.
\end{proof}
\begin{proof}[Proof of Theorem~\ref{a priori bound thm}]
Using the results obtained from Lemma~\ref{bound on A1 and Phi1 lem} and Lemma~\ref{bound on Phi2 Phi3 A2 lem}, the bound \eqref{bounds on phi 1 and 2} follows immediately from the estimates \eqref{bound on A1 and Phi1} and \eqref{bound on Phi2 Phi3 A2}.
\end{proof}

\section{Higher Order Estimates and proof of Theorem~\ref{main thm}}\label{higher order section}

In this section we continue to obtain higher order estimates on the smooth local solution $(\rho-\tilde\rho,u,B)$ as described in section~\ref{A priori estimates section}. Together with Theorem~\ref{a priori bound thm}, we show that, under the assumption \eqref{pointwise bound on rho finite time}, the smooth local solution to \eqref{MHD1}-\eqref{MHD4} can be extended beyond the maximal time of existence $T^*$ as defined in the previous section, thereby contradicting the maximality of $T^*$. The following is the main theorem of this section:

\begin{thm}\label{higher order bounds thm}
Assume that the hypotheses and notations in  {\rm Theorem~\ref{a priori bound thm}} are in force. Given $M_0>0$ and $\tilde\rho>0$, assume further that $(\rho-\tilde\rho,u,B)$ satisfies \eqref{pointwise bound on rho finite time}. If $C>0$ is the constant as obtained in Theorem~\ref{a priori bound thm}, then for each $t\in[0,T^*)$, there exists a positive number $M$ which depends on $C$, $M_0$, $t$, $\tilde\rho$ and the system parameters $P$, $\mu$, $\lambda$, $\nu$ such that
\begin{align}\label{3.1}
\sup_{0\le s\le t}||(\rho-\tilde\rho,u,B)||_{H^3(\R^3)}+\int_0^t||(u,B)(\cdot,s)||^2_{H^4(\R^3)}\le M.
\end{align}
\end{thm}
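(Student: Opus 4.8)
The plan is to bootstrap the first-order control furnished by Theorem~\ref{a priori bound thm}, namely $\Phi_1+\Phi_2\le C$, up to the full $H^3$ bound \eqref{3.1}. The organizing principle is that the density is merely transported by \eqref{MHD1} and so enjoys no smoothing: propagating $\|\rho-\trho\|_{H^3}$ can only be done through a transport-type Gr\"onwall inequality whose growth rate is governed by $\int_0^t\|\nabla u(\cdot,s)\|_{L^\infty}\,ds$. Thus I would first secure this $L^1_tW^{1,\infty}_x$ bound on the velocity, and only then run the energy estimates for $(\rho-\trho,u,B)$. Note that $B$ does not enter \eqref{MHD1}, so for the density step only $\nabla u$ (not $\nabla B$) is needed; the magnetic field, being governed by the parabolic equation \eqref{MHD3}, can have its regularity upgraded directly inside the energy estimates.

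To bound $\int_0^t\|\nabla u\|_{L^\infty}\,ds$ I would use the Helmholtz-type splitting $u=u_F+u_P$, where $(\mu+\lambda)\Delta u_F^j=F_{x_j}+(\mu+\lambda)\omega^{j,k}_{x_k}$ and $(\mu+\lambda)\Delta u_P^j=(P-P(\trho))_{x_j}$. For $u_F$, differentiating the defining equation and applying the Sobolev bound \eqref{L infty bound general} with some $p>3$ reduces $\|\nabla u_F\|_{L^\infty}$ to $\|F\|_{W^{1,p}}+\|\omega\|_{W^{1,p}}$; by \eqref{bound on nabla F} and \eqref{bound on omega} together with the definition of $F$ these are controlled by the $L^p$ norms of $\dot u$, $B\nabla B$, $\nabla u$ and $\rho-\trho$, whose time integrals are finite by interpolation from $\Phi_1+\Phi_2\le C$, giving $\int_0^t\|\nabla u_F\|_{L^\infty}\,ds\le C$. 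The term $u_P$ is the delicate one. Because the second-order Riesz-type operator defining $\nabla u_P$ does not map $L^\infty$ into $L^\infty$, the pointwise density bound \eqref{pointwise bound on rho finite time}---equivalently $P(\rho)\in L^\infty$---yields, via the Newtonian-potential estimates of \cite{BC94}, only that $u_P=(\mu+\lambda)^{-1}\Gamma_{x_j}*(P-P(\trho))$ is \emph{log-Lipschitz}. Combining this with the time-integrable Lipschitz bound on $u_F$ just obtained, the full velocity $u$ is log-Lipschitz with an Osgood-integrable modulus, so its flow map $x\mapsto X(x,t)$ is H\"older continuous in space. Since $\rho_0-\trho\in H^3(\R^3)\hookrightarrow C^{0,\alpha}$ by \eqref{1.8}, solving \eqref{MHD1} along this flow propagates H\"older continuity of $\rho(\cdot,t)$ for $t>0$, as in \cite{suen20a}, hence of $P(\rho(\cdot,t))$. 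Feeding this H\"older regularity back into $(\mu+\lambda)\Delta u_P^j=(P-P(\trho))_{x_j}$ and invoking Schauder estimates for the singular integral upgrades $\nabla u_P$ to $L^\infty$, closing the bound on $\int_0^t\|\nabla u_P\|_{L^\infty}\,ds$ and hence on $\int_0^t\|\nabla u\|_{L^\infty}\,ds$.

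With $\int_0^t\|\nabla u\|_{L^\infty}\,ds\le C$ in hand, I would close \eqref{3.1} by a coupled, inductively ordered energy estimate. Applying $\nabla^k$ ($k\le 3$) to \eqref{MHD1}, testing against $\nabla^k(\rho-\trho)$ and handling the commutators yields an inequality of the form $\frac{d}{dt}\|\rho-\trho\|_{H^3}^2\le C(1+\|\nabla u\|_{L^\infty})\|\rho-\trho\|_{H^3}^2+C\|u\|_{H^4}^2$, the last term arising from the top-order factor $\rho\,\divv\nabla^k u$. The $H^4$ norms of $u$ and $B$ are recovered in $L^2_t$ from elliptic regularity applied to the momentum equation---written as $\mu\Delta u+\lambda\nabla\divv u=\rho\dot u+\nabla P+\nabla(\tfrac12|B|^2)-\divv(BB^T)$---and to the parabolic equation \eqref{MHD3}, in terms of $\|\rho-\trho\|_{H^3}$ and of bounds on $\dot u$, $B_t$ in higher norms. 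Those higher material- and time-derivative bounds are not contained in $\Phi_2$ and must be produced by differentiating \eqref{MHD2}--\eqref{MHD3} once more and testing, so the whole estimate is run as a ladder: from $\Phi_1,\Phi_2$ to $u,B\in L^\infty_tH^2$ and $\rho\in L^\infty_tH^2$, then to $\dot u\in L^\infty_tH^1\cap L^2_tH^2$ and $u,B\in L^2_tH^3$, and finally to $\rho\in L^\infty_tH^3$ and $u,B\in L^2_tH^4$. Assembling the pieces into a single Gr\"onwall inequality for $\|(\rho-\trho,u,B)\|_{H^3}^2+\int_0^t\|(u,B)\|_{H^4}^2$ then gives \eqref{3.1}.

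The main obstacle is the passage from \eqref{pointwise bound on rho finite time} to $\nabla u_P\in L^\infty$: the point is precisely that $L^\infty$ control of the pressure is \emph{insufficient} for $L^\infty$ control of $\nabla u_P$, and the missing regularity is supplied only by the H\"older continuity of $\rho$, which is itself available only because the full velocity is log-Lipschitz and its flow therefore preserves H\"older classes. Making the constants in the log-Lipschitz/Osgood and Schauder estimates depend only on $M_0$, $t$, $\trho$ and the fixed a priori constant $C$---so that they do not degenerate as $t\uparrow T^*$---is the technical heart of the argument. Once the $W^{1,\infty}$ bound on $u$ is secured, the remaining ladder of energy estimates is standard, if lengthy.
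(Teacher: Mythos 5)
Your proposal is correct and follows essentially the same route as the paper's own proof: the identical decomposition $u=u_F+u_P$ with the log-Lipschitz/Osgood argument, H\"older continuity of the flow map, propagation of H\"older regularity of $\rho$ along integral curves (which the paper carries out via the estimate on $\log\rho$, using the monotonicity of $P$ to discard the pressure-difference term), and the Schauder upgrade of $\nabla u_P$, followed by a ladder of transport/elliptic/parabolic energy estimates matching the paper's Steps 2--5. The only notable difference is technical: the paper runs the top-order estimates with forward time- and space-difference quotients rather than direct differentiation of \eqref{MHD2}--\eqref{MHD3}, which is how it avoids imposing extra compatibility conditions on the initial data.
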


We give the proof of Theorem~\ref{higher order bounds thm} in a sequence of steps. Most of the details are reminiscent of \cite{suenhoff12} and \cite{suen20b}, hence we omit some of those which are identical to arguments given in \cite{suenhoff12} or \cite{suen20b}. Throughout this section, $M$ denotes a generic constant which depends on $M_0$, $t$, $\tilde\rho$, $P$, $\mu$, $\lambda$, $\nu$, and it may be changed from line to line. 

We first begin with the following estimates on the time integral of the velocity gradient $\nabla u$:

\medskip

\noindent{\bf  Step 1:} {\em The velocity gradient satisfies the following bound}
\begin{align}\label{time integral of nabla u}
\int_0^t||\nabla u(\cdot,s)||_{L^\infty}\le M.
\end{align}
\begin{proof}[Proof of Step 1]
The proof is similar to the one given in \cite{suen20b}, and we only sketch here. The key is to decompose $u$ as $u=u_{F}+u_{P}$, where $u_{F}$, $u_{P}$ satisfy
\begin{align}\label{decomposition of u}
\left\{
 \begin{array}{lr}
(\mu+\lambda)\Delta (u_{F})^{j}=F_{x_j} +(\mu+\lambda)(\omega)^{j,k}_{x_k}\\
(\mu+\lambda)\Delta (u_P)^{j}=(P-P(\tilde{\rho}))_{x_j}.\\
\end{array}
\right.
\end{align}
In view of the decomposition \eqref{decomposition of u}, it suffices to bound the time integral of $\|\nabla u_F\|_{L^\infty}$ and $\|\nabla u_P\|_{L^\infty}$. Using \eqref{L infty bound general}, for $r>3$, we have
\begin{align*}
\int_{0}^{t}||\nabla u_{F}(\cdot,s)||_{\infty}\le C(r)\int_{0}^{t}\left[||\nabla u_{F}(\cdot,s)||_{L^r}+||D_{x}^2 u_{F}(\cdot,s)||_{L^r}\right].
\end{align*}
and with the help of the bound \eqref{bounds on phi 1 and 2}, the right side of the above can be bounded by $M$. On the other hand, to bound the time integral of $\nabla u_P$, by the pointwise bound \eqref{pointwise bound on rho finite time} on $\rho$, one can show that $u_P(\cdot,t)$ is, in fact, {\it log-Lipschitz} with bounded log-Lipschitz seminorm. This is crucial for proving that, the integral curve $x(y,t)$ as defined by 
\begin{align*}
\left\{ \begin{array}
{lr} \dot{x}(t)
=u(x(t),t)\\ x(0)=y,
\end{array} \right.
\end{align*}
is H\"{o}lder-continuous in $y$. Upon integrating the mass equation along integral curves $x(t,y)$ and $x(t,z)$, subtracting and recalling the definition \eqref{definition of F} of $F$, we obtain that
\begin{align}\label{Holder norm estimate of rho}
|\log\rho&(x(t,y),t)-\log\rho(x(t,z),t)|\notag\\
&\le|\log\rho_0(y)-\log\rho_0(z)|
+\int_0^t| P(\rho_0(x(s,y),s))-P(\rho(x(s,z),s)|\notag \\
&\qquad\qquad\qquad\qquad\qquad\qquad\quad+\int_0^t|F(x(s,y),s)-F(x(s,z),s)|.
\end{align}
Since $P$ is increasing, the second term of the above is dissipative and can be dropped out. Moreover, with the help of the estimate \eqref{bound on nabla F} on $F$ and the H\"{o}lder-continuity of $x(y,t)$, the third term can be bounded by $M$. Hence we can conclude from \eqref{Holder norm estimate of rho} that $\rho(\cdot,t)$ is $C^{\beta(t)}$ for some $\beta(t)>0$ with bounded modulus. Finally, with the improved regularity on $\rho(\cdot,t)$, we can now make use of \eqref{decomposition of u}$_2$ again and apply properties of Newtonian potentials to conclude that the $C^{1+\beta(t)}(\R^3)$ norm of $u_P$ remains finite in finite time, thereby giving the required bound on $\int_0^t\|\nabla u_P\|_{L\infty}$.
\end{proof}
\noindent{\bf  Step 2:} {\em We further obtain}
\begin{align}
||D^2_{x}u(\cdot,t)||_{L^2}&\le M\left[||\rho \dot{u}(\cdot,t)||_{L^2}+||\nabla B\cdot B(\cdot,t)||_{L^2}+||\nabla P(\cdot,t)||_{L^2}\right],\label{3.5a}
\end{align}
\begin{align}
||D^3_{x}u(\cdot,t)||_{L^2}&\le M\left[||\nabla\rho\cdot\dot u(\cdot,t)||_{L^2}+||\rho\nabla\dot u(\cdot,t)||_{L^2}+||B\cdot D^2_{x}B(\cdot,t)||_{L^2}\right]\notag\\
&\qquad+M\left[|||\nabla B|^2(\cdot,t)||_{L^2}+||D^2_{x}P(\cdot,t)||_{L^2}\right].\label{3.5b}
\end{align}
\begin{proof}[Proof of Step 2]
These follow immediately from the momentum equation \eqref{MHD2} and the ellipticity of the Lam\'{e} operator $\varepsilon\Delta+(\varepsilon+\lambda)\nabla\divv$; see \cite{swz11} for related discussion.
\end{proof}
\noindent{\bf  Step 3:} {\em The following $H^2$ bound for density hol}
\begin{align}\label{3.5c}
\sup_{0\le s\le t}||(\rho-\tilde\rho)(\cdot,s)||_{H^2}\le M.
\end{align}
\begin{proof}[Proof of Step 3]
We take the spatial gradient of the mass equation \eqref{MHD1}, multiply by $\nabla\rho$ and integrate by parts to obtain
\begin{align}
\frac{\partial}{\partial t}\int_{\R^3}|\nabla\rho|^2\le M\left[\int_{\R^3}|\nabla\rho|^2+\int_{\R^3}|D^2_{x} u|^2\right]\label{3.6}
\end{align}
Thanks to \eqref{3.5a}, we have
\begin{align*}
\int_0^t\int_{\R^3}|D^2_{x} u|^2&\le\int_0^t\int_{\R^3}(|\dot u|^2+|\nabla B\cdot B|^2+|\nabla\rho|^2)\\
&\le M+\int_0^t\int_{\R^3}|\nabla\rho|^2,
\end{align*}
hence by applying the above to \eqref{3.6} and using the bound \eqref{time integral of nabla u} on the time integral of $\|\nabla u\|_{L^\infty}$, we conclude that
\begin{align*}
\sup_{0\le s\le t}||\nabla\rho(\cdot,s)||_{L^2}\le M.
\end{align*}
By repeating the argument, one can prove that $\dis\sup_{0\le s\le t}||D^2_{x}\rho(\cdot,s)||_{L^2}\le M$ and \eqref{3.5c} follows.
\end{proof}
\noindent{\bf  Step 4:} {\em The velocity and magnetic field satisfy}
\begin{align}\label{3.7}
\sup_{0\le s\le t}\left(||u(\cdot,s)||_{H^3}+||B(\cdot,s)||_{H^3}\right)\le M.
\end{align}
\begin{proof}[Proof of Step 4]
Define the forward difference of quotient $D^h_{t}$ by
\begin{align*}
D^h_t(f)(t)=(f(t+h)-f(t))h^{-1}
\end{align*}
and let $E^j=D^h_t(u^j)+u\cdot\nabla u^j$. By applying $E^j$ on the momentum equation \eqref{MHD2} and differentiating, it gives
\begin{align*}
&\int_{\R^3}\rho|E_{x_j}|^2+\int_0^t\int_{\R^3}\left(|\nabla E_{x_j}|^2+|D^h_t(\divv(u_{x_j})+u\cdot\nabla(\divv(u_{x_j})|^2\right)\\
&\qquad\qquad\qquad\le M+\int_0^t\int_{\R^3}|\nabla E|^2+\mathcal O(h),
\end{align*}
where $\mathcal O(h)\rightarrow 0$ as $h\rightarrow0$. Therefore by choosing $h\rightarrow0$, we conclude
\begin{align*}
\sup_{0\le s\le t}||\nabla\dot u(\cdot,s)||_{L^2}+\int_0^t\int_{\R^3}|D^2_x\dot u|^2\le M,
\end{align*}
and the bound for $\nabla B_t$ can be derived in a similar way.
\end{proof}
\noindent{\bf  Step 5:} {\em Finally we have the following boun}
\begin{align}
&\int_0^t\int_{\R^3}(|D^4_{x}u|^2+|D^4_{x}B|^2)\le M\left[1+\int_0^t\int_{\R^3}|D^3_{x}\rho|^2\right],\label{3.8a}\\
&\sup_{0\le s\le t}\left(||D^3_{x}\rho(\cdot,s)||_{L^2}+||D^3_{x}B(\cdot,s)||_{L^2}\right)+\int_0^t\int_{\R^3}|D^4_{x}u|^2\le M.\label{3.8b}
\end{align}
\begin{proof}[Proof of Step 5]
To prove \eqref{3.8a}, we differentiate \eqref{MHD2} and \eqref{MHD3} twice with respect to space, express the fourth derivatives of $u$ and $B$ in the terms second derivatives of $\dot u$, $B_t$, $\nabla\rho$ and lower order terms, and apply the bounds in \eqref{pointwise bound on rho finite time} and \eqref{3.7}.

On the other hand, to prove \eqref{3.8b}, we apply two space derivatives and one spatial difference operator $D^h_{x_j}$ defined by
\begin{align*}
D^h_{x_j}(f)(t)=(f(x+he_j)-f(x))h^{-1}
\end{align*}
such that
\begin{align*}
\int_{\R^3}|D^h_{x_j} D_{x_i} D_{x_k}\rho|^2&\le M+\int_0^t\int_{\R^3}(|D^4_x u|^2+|D^h_{x_j}D_{x_i}D_{x_k}\rho|^2)\\
&\le M+\int_0^t\int_{\R^3}|D^3_{x}\rho|^2.
\end{align*}
Taking $h\rightarrow0$ and applying Gronwall's inequality, we obtain the required bound for the term $\dis||D^3_{x}\rho(\cdot,s)||_{L^2}$. 
\end{proof}

\begin{proof}[Proof of Theorem~\ref{main thm}]
Using Theorem~\ref{higher order bounds thm}, we can apply an open-closed argument on the time interval which is identical to the one given in Hoff and Suen \cite{suenhoff12} pp. 31 to extend the local solution $(\rho-\tilde\rho,u,B)$ beyond $T^*$, which contradicts the maximality of $T^*$. Therefore the assumption \eqref{pointwise bound on rho finite time} does not hold and this completes the proof of Theorem~\ref{main thm}.
\end{proof}



\bibliographystyle{amsalpha}

\bibliography{References_for_blow_up_MHD_general}

\end{document}